\newtheorem{theorem}{Theorem}[section]
\newtheorem{lemma}[theorem]{Lemma}
\newtheorem{corollary}[theorem]{Corollary}
\newtheorem{proposition}[theorem]{Proposition}
\newtheorem{definition}[theorem]{Definition}
\DeclareMathOperator{\diverg}{div}
\DeclareMathOperator{\Lip}{Lip}
\DeclareMathOperator{\MFG}{MFG}
\DeclareMathOperator{\OCP}{OCP}
\DeclareMathOperator{\Adm}{Adm}
\DeclareMathOperator{\Opt}{Opt}
\DeclarePairedDelimiter{\abs}{\lvert}{\rvert}
\title{\LARGE \bf
On the characterization of equilibria of nonsmooth minimal-time mean field games with state constraints
}
\author{Saeed Sadeghi Arjmand$^1$ and Guilherme Mazanti$^2$%
\thanks{$^1$CMLS, \'Ecole Po\-ly\-tech\-ni\-que, CNRS, Universit\'e Paris-Saclay, 91128, Palaiseau, France, \& Uni\-ver\-si\-t\'e Paris-Saclay, CNRS, CentraleSup\'elec, Inria, Laboratoire des signaux et syst\`emes, 91190, Gif-sur-Yvette, France.
        {\tt\small saeed.\allowbreak{}sadeghi\allowbreak{}-arjmand@polytechnique.edu}}%
\thanks{$^2$Universit\'e Paris-Saclay, CNRS, CentraleSup\'elec, Inria, Laboratoire des signaux et syst\`emes, 91190, Gif-sur-Yvette, France.
        {\tt\small guilherme.\allowbreak{}mazanti@inria.fr}}%
}
\begin{document}

\newlist{hypotheses}{enumerate}{1}
\setlist[enumerate, 1]{label={\textup{(\alph*)}}, leftmargin=*}
\setlist[hypotheses]{label={\textup{(H\arabic*)}}, leftmargin=*}

\maketitle
\thispagestyle{empty}
\pagestyle{empty}

\begin{abstract}
In this paper, we consider a first-order deterministic mean field game model inspired by crowd motion in which agents moving in a given domain aim to reach a given target set in minimal time. To model interaction between agents, we assume that the maximal speed of an agent is bounded as a function of their position and the distribution of other agents. Moreover, we assume that the state of each agent is subject to the constraint of remaining inside the domain of movement at all times, a natural constraint to model walls, columns, fences, hedges, or other kinds of physical barriers at the boundary of the domain. After recalling results on the existence of Lagrangian equilibria for these mean field games and the main difficulties in their analysis due to the presence of state constraints, we show how recent techniques allow us to characterize optimal controls and deduce that equilibria of the game satisfy a system of partial differential equations, known as the mean field game system.
\end{abstract}

\section{Introduction}

The concept of mean field games (referred to as ``MFGs'' in this paper for short) was first introduced around 2006 by two independent groups, P.~E.~Caines, M.~Huang, and R.~P.~Malhamé \cite{Huang2003Individual, HuangMalhame}, and J.-M.~Larsy and P.-L.~Lions\cite{LasryLionsfr1, LasryLionsfr2}, motivated by problems in economics and engineering and building upon previous works on games with infinitely many agents such as \cite{Aumann1974Values, Jovanovic1988Anonymous}. Roughly speaking, MFGs are game models with a continuum of indistinguishable, rational agents influenced only by the average behavior of other agents, and the typical goal of their analysis is to characterize their equilibria. We refer the interested reader to \cite{Carmona2018Probabilistic} for more details on MFGs.

In this paper, we study an MFG model inspired by crowd motion in which agents want to reach a given target set in minimal time, their maximal speed being bounded in terms of the distribution of other agents and their state being constrained to remain in a given bounded set. Modeling and analysis of crowd motion have been the subject of a large number of works from different perspectives, such as \cite{Cristiani2014Multiscale, Gibelli2018Crowd, Muntean2014Collective}, and some deterministic and stochastic MFG models have been already proposed, for instance, in \cite{Bagagiolo2019Optimal, Burger2013Mean, Dweik2020Sharp, Lachapelle2011Mean, Mazanti2019Minimal, SadeghiMulti}. MFG models for crowd motion usually try to capture strategic choices of the crowd based on the rational anticipation by an agent of the behavior of others.

The MFG model we consider in this paper is that of \cite{Mazanti2019Minimal}, its detailed description is provided in Section~\ref{SecModel}. An important feature of the model from \cite{Mazanti2019Minimal} which renders its analysis more delicate is the fact that the final time of the movement of an agent is not prescribed, but it is part of the agent's optimization criterion. Reference \cite{Mazanti2019Minimal} establishes existence of equilibria of the considered MFG model, but additional properties, such as characterization of optimal controls and characterization of equilibria through the system of PDEs known as MFG system, are only obtained in \cite{Mazanti2019Minimal} under the restrictive assumption that the target set of the agents is the whole boundary of the domain, which avoids the presence of state constraints in the minimal-time optimal control problem solved by each agent. The main contribution of the present paper is to characterize optimal controls and obtain the MFG system without such a restrictive assumption.

The major difficulty in analyzing optimal control problems with state constraints is that their value functions may fail to be semiconcave (see, e.g., \cite[Example~4.4]{CannarsaPiermarcoCastelpietra}), the latter property being important in the characterization of optimal controls (see, e.g., \cite{Cannarsa2004Semiconcave}). In this paper, we rely instead on the techniques introduced in \cite{SadeghiMulti} to characterize optimal controls, which do not rely on the semiconcavity of the value function and also allow for weaker regularity assumptions on the dynamics of agents. In order to obtain the classical necessary optimality conditions from Pontryagin Maximum Principle (PMP) under state constraints and few regularity assumptions, we rely on the nonsmooth PMP from \cite{Clarke} and make use of the technique from \cite{CannarsaCastelpietraCardaliaguet} to deal with state constraints. We also refer the interested reader to \cite{CannarsaPiermarco, Cannarsa2019C11, CannarsaMean} for an alternative approach for dealing with other MFG models with state constraints.

Our main results are Theorem~\ref{thm Ut_0,x_0} and its Corollary~\ref{coro normalized}, which provide the characterization of optimal controls, and Theorem~\ref{Thm MFG system}, which relies on that characterization to show that equilibria of the MFG satisfy a suitable system of PDEs.

This paper is organized as follows. Section~\ref{SecModel} presents the MFG model and the definition of equilibria and recalls the major previous results useful for this paper. We then study, in Section~\ref{SecOptControl}, the corresponding optimal control problem, providing the characterization of optimal controls under state constraints. This analysis is finally used in Section~\ref{sec MFG system} to show that equilibria of the mean field game satisfy a system of PDEs made of a continuity equation on the density of agents and a Hamilton--Jacobi equation on the value function of the corresponding optimal control problem.

\smallskip

\noindent\emph{Notation.} In this paper, $d$ denotes a positive integer, the set of nonnegative real numbers is denoted by $\mathbb R_+$, $\mathbb R^d$ is endowed with its usual Euclidean norm $\abs{\cdot}$, and $\mathbb S^{d-1}$ denotes the unit sphere in $\mathbb R^d$. For $R \geq 0$, we use $B_R$ to denote the closed ball in $\mathbb R^d$ centered at the origin and with radius $R$. The closure of a set $A \subset \mathbb R^d$ is denoted by $\bar A$.

Given a Polish space $X$, the set of all Borel probability measures on $X$ is denoted by $\mathcal P(X)$, which is always assumed to be endowed with the weak convergence of measures. When $X$ is endowed with a complete metric $d$ with respect to which $X$ is bounded, we assume that $\mathcal P(X)$ is endowed with the Wasserstein distance $W_1$ defined by $W_1(\mu, \nu) = \sup_\phi \int_{X} \phi(x) d(\mu - \nu)(x)$, where the supremum is taken over all $1$-Lipschitz continuous functions $\phi: X \to \mathbb R$.

Given two metric spaces $X, Y$ and $M \geq 0$, we denote by $C(X, Y)$ the set of all continuous functions $f: X \to Y$, $\Lip(X, Y)$ the subset of $C(X, Y)$ of all Lipschitz continuous functions, and by $\Lip_M(X, Y)$ the subset of $\Lip(X, Y)$ of those functions whose Lipschitz constant is bounded by $M$. When $X = \mathbb R_+$, the above sets are denoted simply by $C(Y)$, $\Lip(Y)$, and $\Lip_M(Y)$, respectively.

For compact $A \subset \mathbb R^d$, the space $C(A)$ is assumed to be endowed with the topology of uniform convergence on compact sets, with respect to which $C(A)$ is a Polish space. For $t\in \mathbb R_+$, we let $e_t: C(A) \to A$ denote the evaluation map, defined for $\gamma \in C(A)$ by $e_t(\gamma) = \gamma(t)$.

If $X$ and $Y$ are two metric spaces endowed with their Borel $\sigma$-algebras, $f: X \to Y$ is Borel measurable, and $\mu$ is a Borel measure in $X$, we denote the pushforward of $\mu$ through $f$ by $f_{\#} \mu$, i.e., $f_{\#} \mu$ is the Borel measure in $Y$ defined by $f_{\#} \mu(A) = \mu(f^{-1}(A))$ for every Borel subset $A$ of $Y$.

\section{Description of the MFG model and previous results}
\label{SecModel}

In this paper, we fix an open and bounded set $\Omega \subset \mathbb R^d$ and we let $\Gamma \subset \bar\Omega$ be a closed nonempty set. We shall always assume that $\Omega$ satisfies the following hypothesis.

\begin{hypotheses}
\item\label{HypoOmega-GeoDist} There exists $D > 0$ such that, for every $x, y \in \bar\Omega$, there exists a curve $\gamma$ included in $\bar\Omega$ connecting $x$ to $y$ and of length at most $D \abs{x - y}$.
\end{hypotheses}

Note that \ref{HypoOmega-GeoDist} means that the geodesic distance in $\bar\Omega$ is equivalent to the usual Euclidean distance.

Let $K: \mathcal P(\bar\Omega) \times \bar\Omega \to \mathbb R_+$ be bounded and $m_0 \in \mathcal P(\bar\Omega)$. The MFG considered in this paper, denoted by $\MFG(K, m_0)$, is described as follows. A population of agents moves in $\bar\Omega$ and is described at time $t \geq 0$ by a time-dependent probability measure $m_t\in \mathcal{P}(\bar{\Omega})$, where $m_0$ is the prescribed probability measure. Each agent wants to choose their trajectory in $\bar{\Omega}$ in order to reach the target set $\Gamma$ in minimal time, with the constraints that the agent must remain in $\bar\Omega$ at all times and that their maximal velocity at time $t$ and position $x$ is given by $K(m_t, x)$, i.e., the trajectory $\gamma$ of each agent solves the control system
\[
\dot\gamma(t) = K(m_t, \gamma(t)) u(t), \qquad \gamma(t) \in \bar\Omega,\, u(t) \in B_1,
\]
where $u(t)$ is the control of the agent at time $t$, chosen in order to minimize the time to reach $\Gamma$. We also assume that, once an agent reaches $\Gamma$, they stop.

Note that agents interact through the maximal velocity $K(m_t, x)$, which depends on the distribution of agents $m_t$ at time $t$. Hence, the trajectory of an agent depends on $m_t$ and, on the other hand, $m_t$ is determined by the trajectories of the agents. We are interested here in \emph{equilibrium} situations, i.e., situations in which, starting from an evolution $t \mapsto m_t$, the optimal trajectories chosen by the agents induce an evolution of the initial distribution $m_0$ that coincides with $t \mapsto m_t$ (a more mathematically precise definition of equilibrium is provided in Definition~\ref{DefiEquilibrium} below).

The interaction term $K(m_t, x)$ can be used to model congestion phenomena in crowd motion by choosing a function $K$ such that $K(m_t, x)$ is small when $m_t$ is ``large'' around $x$, which means that it is harder to move on more crowded regions. For instance, $K$ may be chosen as
\[
K(\mu, x) = g\left(\int_{\bar\Omega} \chi(x - y) d\mu(y)\right),
\]
where $\chi$ is a convolution kernel representing the region around an agent at which they look in order to evaluate local congestion and $g$ is a decreasing function. Note that we do \emph{not} assume this specific form for $K$ in this paper.

\subsection{The auxiliary optimal control problem}

Let us describe the optimal control problem solved by each agent of the game. Let $k: \mathbb R_+ \times \bar\Omega \to \mathbb R_+$ be bounded and consider the control system
\begin{equation}
\label{Control sys}
\dot\gamma(t) = k(t, \gamma(t)) u(t), \qquad \gamma(t) \in \bar\Omega,\, u(t) \in B_1,
\end{equation}
where $\gamma(t)$ is the state and $u(t)$ is the control at time $t \geq 0$. An absolutely continuous function $\gamma: \mathbb R_+ \to \bar\Omega$ is said to be an \emph{admissible trajectory} of \eqref{Control sys} if it satisfies \eqref{Control sys} for a.e.\ $t \geq 0$ for some measurable $u: \mathbb R_+ \to B_1$, and the corresponding function $u$ is said to be the \emph{control} associated with $\gamma$. The set of all admissible trajectories for \eqref{Control sys} is denoted by $\Adm(k)$. For $\gamma \in \Adm(k)$ and $t_0 \geq 0$, the \emph{first exit time after $t_0$ of $\gamma$} is the value $\tau_\Gamma(t_0, \gamma) = \inf\{T \geq 0 \mid \gamma(t_0 + T) \in \Gamma\}$, with the convention that $\inf\varnothing = +\infty$.

We consider the optimal control problem $\OCP(k)$ defined as follows: given $(t_0, x_0) \in \mathbb R_+ \times \bar\Omega$, solve
\begin{equation}
\label{min exit time}
\inf_{\substack{\gamma \in \Adm(k) \\ \gamma(t_0) = x_0}} \tau_\Gamma(t_0, \gamma).
\end{equation}
A trajectory $\gamma$ attaining the above infimum is called an \emph{optimal trajectory for $(k, t_0, x_0)$} and its associated control $u$ is called an \emph{optimal control}. Note that an optimal control $u$ for $\gamma$ remains optimal if it is modified outside of the interval $[t_0, t_0 + \tau_\Gamma(t_0, \gamma)]$. In order to avoid any ambiguity, we always assume, in this paper, that optimal controls are equal to $0$ in the intervals $[0, t_0)$ and $(t_0 + \tau_\Gamma(t_0, \gamma), +\infty)$, and in particular optimal trajectories are constant in the intervals $[0, t_0]$ and $[t_0 + \tau_\Gamma(t_0, \gamma), +\infty)$. The set of all optimal trajectories for $(k, t_0, x_0)$ is denoted by $\Opt(k, t_0, x_0)$.

The link between $\MFG(K, m_0)$ and $\OCP(k)$ is that, given an evolution of agents $t \mapsto m_t$, each agent of the crowd solves $\OCP(k)$ with $k(t, x) = K(m_t, x)$. The optimal control problem $\OCP(k)$ is a minimum time problem, which is a classical problem in control theory for which several results are available (see, e.g., \cite[Chapter~8]{Cannarsa2004Semiconcave} and \cite[Chapter~IV]{Bardi1997Optimal}). A classical tool in the analysis of optimal control problems is the value function $\varphi: \mathbb R_+ \times \bar\Omega \to \mathbb R_+$, defined for $(t_0, x_0) \in \mathbb R_+ \times \bar\Omega$ by setting $\varphi(t_0, x_0)$ to be equal to the value of the infimum in \eqref{min exit time}.

We shall consider in this paper $\OCP(k)$ under the following assumption.

\begin{hypotheses}[resume]
\item\label{HypoOCP-k} We have $k \in \Lip(\mathbb R_+ \times \bar\Omega, \mathbb R_+)$ and there exist positive constants $K_{\min}, K_{\max}$ such that $k(t, x) \in [K_{\min}, K_{\max}]$ for every $(t, x) \in \mathbb R_+ \times \bar\Omega$.
\end{hypotheses}

We collect in the next proposition classical results on $\OCP(k)$ that will be of use in this paper (see, e.g., \cite[Section~4]{Mazanti2019Minimal}).

\begin{proposition}
\label{PropOCP}
Consider $\OCP(k)$ under hypotheses \ref{HypoOmega-GeoDist} and \ref{HypoOCP-k} and let $(t_0, x_0) \in \mathbb R_+ \times \bar\Omega$.
\begin{enumerate}
\item The set $\Opt(k, t_0, x_0)$ is nonempty.
\item The value function $\varphi$ is Lipschitz continuous on $\mathbb R_+ \times \bar\Omega$.
\item\label{PropOCP-DPP} For every $\gamma \in \Adm(k)$ such that $\gamma(t_0) = x_0$, we have, for every $h \geq 0$,
\begin{equation}
\label{eq:DPP}
\varphi(t_0 + h, \gamma(t_0 + h)) + h \geq \varphi(t_0, x_0),
\end{equation}
with equality if $\gamma \in \Opt(k, t_0, x_0)$ and $h \in [0, \tau_\Gamma(t_0, \gamma)]$. Conversely, if $\gamma \in \Adm(k)$ satisfies $\gamma(t_0) = x_0$, if $\gamma$ is constant on $[0, t_0]$ and on $[t_0 + \tau_\Gamma(t_0, \gamma), +\infty)$, and if equality holds in \eqref{eq:DPP} for every $h \in [0, \tau_\Gamma(t_0, \gamma)]$, then $\gamma \in \Opt(k, t_0, x_0)$.
\item\label{PropOCP-HJ} The value function $\varphi$ satisfies the Hamilton--Jacobi equation
\begin{equation}
\label{H-J equation}
-\partial_t \varphi(t,x)+\abs{\nabla \varphi(t,x)} k(t, x) - 1 = 0
\end{equation}
in the following sense: $\varphi$ is a viscosity subsolution of \eqref{H-J equation} in $\mathbb R_+ \times (\Omega \setminus \Gamma)$, a viscosity supersolution of \eqref{H-J equation} in $\mathbb R_+ \times (\bar\Omega \setminus \Gamma)$, and satisfies $\varphi(t, x) = 0$ for every $(t, x) \in \mathbb R_+ \times \Gamma$.
\item\label{PropOCP-OptimalControl} If $\gamma \in \Opt(k, t_0, x_0)$, $t \in [t_0, t_0 + \varphi(t_0, x_0))$, $\gamma(t) \in \Omega \setminus \Gamma$, and $\varphi$ is differentiable at $(t, \gamma(t))$, then $\abs{\nabla\varphi(t, \gamma(t))} \neq 0$ and
\[
\dot\gamma(t) = - k(t, \gamma(t)) \frac{\nabla\varphi(t, \gamma(t))}{\abs{\nabla\varphi(t, \gamma(t))}}.
\]
\end{enumerate}
\end{proposition}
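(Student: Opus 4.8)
The plan is to assemble the statement from standard ingredients of minimum-time control theory, as done in \cite[Section~4]{Mazanti2019Minimal}, the two recurring tools being hypothesis~\ref{HypoOmega-GeoDist}, which provides controllability and Lipschitz-type estimates for \eqref{Control sys} inside $\bar\Omega$, and the boundedness and Lipschitz continuity of $k$ from \ref{HypoOCP-k}, which allow comparing trajectories issued from nearby data via Gr\"onwall's inequality. Throughout, the state constraint $\gamma(t)\in\bar\Omega$ is what forces some care, most visibly in item~\ref{PropOCP-HJ}.

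For item~(a) I would run the direct method: given a minimizing sequence $(\gamma_n)$ for \eqref{min exit time}, the bound $\abs{\dot\gamma_n}\le K_{\max}$ makes the $\gamma_n$ uniformly Lipschitz, so a subsequence converges uniformly on compact sets to some $\gamma$ by Arzel\`a--Ascoli; that $\gamma\in\Adm(k)$ follows because $\bar\Omega$ is closed (so the state constraint passes to the limit) and because $B_1$ is convex and \eqref{Control sys} is affine in the control (a standard weak-compactness argument yields an admissible control for $\gamma$), while $\tau_\Gamma(t_0,\cdot)$ is lower semicontinuous along the sequence because $\Gamma$ is closed; finiteness of the infimum comes from \ref{HypoOmega-GeoDist} (join $x_0$ to a point of $\Gamma$ by a curve in $\bar\Omega$ of length at most $D\sup_{y,z\in\bar\Omega}\abs{y-z}$ and traverse it at speed $\ge K_{\min}$). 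The same construction between two space points or two time instants, combined with the Lipschitz continuity of $k$ and Gr\"onwall's inequality, gives the Lipschitz estimate for $\varphi$ of item~(b). For item~\ref{PropOCP-DPP}, inequality \eqref{eq:DPP} is obtained by concatenating $\gamma$ on $[t_0,t_0+h]$ with an optimal trajectory issued from $(t_0+h,\gamma(t_0+h))$: the concatenation is admissible and reaches $\Gamma$ by time $t_0+h+\varphi(t_0+h,\gamma(t_0+h))$; if $\gamma$ is optimal and $h\le\tau_\Gamma(t_0,\gamma)$, the restriction of $\gamma$ to $[t_0+h,+\infty)$ shows the reverse inequality, hence equality; and for the converse, taking $h=\tau_\Gamma(t_0,\gamma)$ in the assumed equality and using $\varphi\equiv0$ on $\Gamma$ gives $\tau_\Gamma(t_0,\gamma)=\varphi(t_0,x_0)<+\infty$.

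Items~\ref{PropOCP-HJ} and \ref{PropOCP-OptimalControl} are then standard viscosity-solution arguments based on \eqref{eq:DPP}. The identity $\varphi\equiv0$ on $\mathbb R_+\times\Gamma$ is immediate. For the subsolution inequality at a point of $\mathbb R_+\times(\Omega\setminus\Gamma)$ I would insert into \eqref{eq:DPP} the trajectory generated by an arbitrary constant control $u_0\in B_1$ --- admissible for small $h>0$ precisely because the point lies in the \emph{open} set $\Omega$ --- divide by $h$, let $h\to0^+$, and optimize over $u_0$; for the supersolution inequality at a point of $\mathbb R_+\times(\bar\Omega\setminus\Gamma)$ I would use instead the equality case of \eqref{eq:DPP} along an optimal trajectory (which exists there by item~(a)), together with the pointwise bound $\partial_t\psi+\nabla\psi\cdot k u\ge\partial_t\psi-k\abs{\nabla\psi}$ for any test function $\psi$. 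This asymmetry between $\Omega$ and $\bar\Omega$ is exactly the state-constraint effect: interior constant controls are available only in $\Omega$. For item~\ref{PropOCP-OptimalControl}, let $\gamma\in\Opt(k,t_0,x_0)$, put $x_1=\gamma(t)$ with $x_1\in\Omega\setminus\Gamma$ and $t<t_0+\varphi(t_0,x_0)$, and suppose $\varphi$ is differentiable at $(t,x_1)$. Since $(t,x_1)$ lies in the region where $\varphi$ is simultaneously a sub- and a supersolution, \eqref{H-J equation} holds there classically, so $1+\partial_t\varphi(t,x_1)=k(t,x_1)\abs{\nabla\varphi(t,x_1)}$. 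Granting for the moment that $\nabla\varphi(t,x_1)\ne0$, I would differentiate the identity $\varphi(t+h,\gamma(t+h))=\varphi(t,x_1)-h$ --- valid for $h$ near $0$ by item~\ref{PropOCP-DPP} --- using the differentiability of $\varphi$ at $(t,x_1)$ and $\abs{\gamma(t+h)-x_1}\le K_{\max}\abs{h}$, to get
\[
\frac{\nabla\varphi(t,x_1)}{\abs{\nabla\varphi(t,x_1)}}\cdot\frac{\gamma(t+h)-x_1}{h}\xrightarrow[h\to0]{}\frac{-(1+\partial_t\varphi(t,x_1))}{\abs{\nabla\varphi(t,x_1)}}=-k(t,x_1),
\]
while at the same time $\bigl\lvert\frac{\gamma(t+h)-x_1}{h}\bigr\rvert\le\frac1{\abs{h}}\bigl\lvert\int_t^{t+h}k(s,\gamma(s))\,ds\bigr\rvert\to k(t,x_1)$; decomposing the difference quotient $\frac{\gamma(t+h)-x_1}{h}$ into its components along and orthogonal to $\nabla\varphi(t,x_1)$, these two facts force the orthogonal component to vanish in the limit, so $\dot\gamma(t)$ exists and equals $-k(t,x_1)\nabla\varphi(t,x_1)/\abs{\nabla\varphi(t,x_1)}$.

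The hard part will be the remaining claim that $\nabla\varphi(t,x_1)\ne0$; by the equation above this is equivalent to $\partial_t\varphi(t,x_1)>-1$, i.e.\ to a strict non-degeneracy of the minimum-time value function at points off $\Gamma$. This cannot be obtained from first-order or dynamic-programming information \emph{at the single point} --- the ``bad case'' $\nabla\varphi=0$, $\partial_t\varphi=-1$ is consistent with \eqref{eq:DPP} and with the equation satisfied at that point --- so the argument must exploit the behaviour of $\varphi$ and of optimal trajectories in a neighbourhood of $(t,x_1)$ inside $\Omega\setminus\Gamma$; this is a classical but genuinely non-trivial property of minimum-time functions (see \cite[Chapter~IV]{Bardi1997Optimal} and \cite[Section~4]{Mazanti2019Minimal}), and it is the step I would expect to require the most care here.
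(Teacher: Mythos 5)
Your sketches of items (a)--(d), and of the direction formula in item~\ref{PropOCP-OptimalControl} \emph{granted} nondegeneracy, follow exactly the classical route that the paper itself invokes (the paper offers no proof of Proposition~\ref{PropOCP}, referring to \cite[Section~4]{Mazanti2019Minimal}): direct method plus the controllability provided by \ref{HypoOmega-GeoDist} for existence and the Lipschitz bound, concatenation for the dynamic programming principle, and the standard test-function arguments for the sub/supersolution properties, with the correct asymmetry between $\Omega$ and $\bar\Omega$ coming from the state constraint. Those parts are fine.

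The genuine gap is the claim $\nabla\varphi(t,\gamma(t))\neq 0$, which you explicitly leave unproven: since the direction formula is an immediate consequence of the chain rule and the Hamilton--Jacobi equation once nondegeneracy is known, this claim is the real content of item~\ref{PropOCP-OptimalControl}, and deferring it as ``classical but non-trivial'' leaves the proof incomplete. Your diagnosis is right that pointwise information cannot suffice (the time derivative can absorb the constant $1$, unlike in the autonomous case where $k\abs{\nabla\varphi}=1$ forces nondegeneracy), but the missing argument is a trajectory-based one, essentially that of \cite[Section~4]{Mazanti2019Minimal}, and it should be supplied. Concretely: with $x=\gamma(t)\in\Omega\setminus\Gamma$ and $T=\varphi(t,x)$, compare $\varphi(t,\gamma(t+h))$ with $\varphi(t,x)$ at the \emph{same} time $t$ by following, from $(t,\gamma(t+h))$, the remaining portion of the path of $\gamma$, reparametrized to compensate the time shift: writing the comparison trajectory as $\beta(s)=\gamma(\sigma(s))$ with advance $a(s)=\sigma(s)-s$, the bounds $K_{\min}\leq k\leq K_{\max}$ and the Lipschitz continuity of $k$ in time from \ref{HypoOCP-k} allow a choice of $\sigma$ that keeps $\beta$ admissible while $\dot a\geq -\tfrac{L}{K_{\min}}a$, so by Gr\"onwall an advance of at least $h e^{-LT/K_{\min}}$ survives until arrival. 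This yields $\varphi(t,\gamma(t+h))\leq\varphi(t,x)-h\,e^{-LT/K_{\min}}$, and dividing by $h$, using differentiability at $(t,x)$ and $\abs{\gamma(t+h)-x}\leq K_{\max}h$, gives $\abs{\nabla\varphi(t,x)}\geq e^{-LT/K_{\min}}/K_{\max}>0$. Note that this comparison is compatible with the state constraint, since $\beta$ has the same image in $\bar\Omega$ as $\gamma$; without this step your write-up proves only the implication ``if $\nabla\varphi\neq 0$ then $\dot\gamma=-k\nabla\varphi/\abs{\nabla\varphi}$'', not item~\ref{PropOCP-OptimalControl} itself.
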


\subsection{Lagrangian equilibria and their existence}

In this paper, we study $\MFG(K, m_0)$ in a Lagrangian setting, in which the evolution of agents is described by a measure $Q \in \mathcal P(C(\bar\Omega))$ in the space of all continuous trajectories $C(\bar\Omega)$. This classical approach in optimal transport has become widely used in the analysis of MFGs with deterministic trajectories in recent years (see, e.g., \cite{CannarsaPiermarco, CardaliaguetPierre, CardaliaguetPierre2, Mazanti2019Minimal, Dweik2020Sharp
}). Note that the distribution $m_t$ of agents at time $t \geq 0$ can be retrieved from $Q$ using the evaluation map $e_t$ by $m_t = {e_t}_{\#} Q$. The definition of an equilibrium of $\MFG(K, m_0)$ is formulated in the Lagrangian setting as follows.

\begin{definition}\label{DefiEquilibrium}
Consider $\MFG(K, m_0)$. A measure $Q \in \mathcal{P}(C(\bar{\Omega}))$ is called a \emph{Lagrangian equilibrium} (or simply \emph{equilibrium}) of $\MFG(K, m_0)$ if ${e_0}_{\#} Q = m_0$ and $Q$-almost every $\gamma \in C(\bar{\Omega})$ is an optimal curve for $(k, 0, \gamma(0))$, where $k:\mathbb R_+ \times \bar{\Omega} \to \mathbb R_+$ is defined by $k(t,x) = K(e_{t\#}Q, x)$.
\end{definition}

The next assumption is the counterpart of \ref{HypoOCP-k} for $\MFG(K, m_0)$.

\begin{hypotheses}[resume]
\item\label{HypoMFG-K} We have $K \in \Lip(\mathcal P(\bar\Omega) \times \bar\Omega, \mathbb R_+)$ and there exist positive constants $K_{\min}, K_{\max}$ such that $K(\mu, x) \in [K_{\min}, K_{\max}]$ for every $(\mu, x) \in \mathcal P(\bar\Omega) \times \bar\Omega$.
\end{hypotheses}

We recall in the next theorem the main result of \cite{Mazanti2019Minimal} concerning existence of equilibria.
\begin{theorem}
Consider $\MFG(K, m_0)$ under assumptions \ref{HypoOmega-GeoDist} and \ref{HypoMFG-K}. Then there exists an equilibrium $Q \in \mathcal{P}(C(\bar{\Omega}))$ for $\MFG(K, m_0)$.
\end{theorem}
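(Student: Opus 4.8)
The plan is to obtain $Q$ as a fixed point of a set-valued best-response map and to apply the Kakutani--Fan--Glicksberg theorem, the standard route to existence of Lagrangian MFG equilibria. Let $\mathcal C$ be the set of all $K_{\max}$-Lipschitz curves $\gamma\colon \mathbb R_+\to\bar\Omega$; by the Arzel\`a--Ascoli theorem and a diagonal argument, $\mathcal C$ is a compact, convex subset of $C(\bar\Omega)$. Put $\mathcal Q=\{Q\in\mathcal P(\mathcal C): {e_0}_{\#}Q=m_0\}$. Since $\mathcal C$ is compact, $\mathcal P(\mathcal C)$ is convex and weakly compact inside the locally convex space of signed Borel measures on $\mathcal C$, and the constraint ${e_0}_{\#}Q=m_0$ is affine and weakly closed; hence $\mathcal Q$ is nonempty (it contains the pushforward of $m_0$ by $x\mapsto(\gamma\equiv x)$), convex and compact. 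For $Q\in\mathcal Q$ the curve $t\mapsto{e_t}_{\#}Q$ is $K_{\max}$-Lipschitz from $\mathbb R_+$ to $(\mathcal P(\bar\Omega),W_1)$, because $Q$ is carried by $K_{\max}$-Lipschitz curves, so by \ref{HypoMFG-K} the function $k_Q(t,x):=K({e_t}_{\#}Q,x)$ satisfies \ref{HypoOCP-k} with the \emph{same} constants $K_{\min},K_{\max}$ for every $Q$. Define $\Phi\colon\mathcal Q\to 2^{\mathcal Q}$ by letting $\Phi(Q)$ be the set of those $\widetilde Q\in\mathcal Q$ under which almost every $\gamma$ belongs to $\Opt(k_Q,0,\gamma(0))$. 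By Definition~\ref{DefiEquilibrium}, a fixed point $Q\in\Phi(Q)$ is exactly an equilibrium of $\MFG(K,m_0)$.

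First I would check that $\Phi$ has nonempty and convex values. Convexity is immediate, since both the initial-marginal constraint and the concentration condition are preserved under convex combinations of measures. For nonemptiness, one uses a measurable selection: Proposition~\ref{PropOCP} guarantees $\Opt(k_Q,0,x)\neq\varnothing$ for every $x\in\bar\Omega$, and (a byproduct of the stability statement below) the set-valued map $x\mapsto\Opt(k_Q,0,x)$ has closed graph in $\bar\Omega\times\mathcal C$; the Kuratowski--Ryll-Nardzewski selection theorem then yields a Borel map $\sigma\colon\bar\Omega\to\mathcal C$ with $\sigma(x)\in\Opt(k_Q,0,x)$, whence $\sigma_{\#}m_0\in\Phi(Q)$.

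The core of the proof is that $\Phi$ has closed graph. Let $Q_n\to Q$ in $\mathcal Q$ and $\widetilde Q_n\in\Phi(Q_n)$ with $\widetilde Q_n\to\widetilde Q$; the goal is $\widetilde Q\in\Phi(Q)$. Since the family $\{t\mapsto{e_t}_{\#}Q_n\}_n$ is equi-Lipschitz in $t$ and $Q_n\to Q$ weakly, one gets ${e_t}_{\#}Q_n\to{e_t}_{\#}Q$ in $W_1$, uniformly for $t$ in bounded sets, and by \ref{HypoMFG-K} this gives $k_{Q_n}\to k_Q$ uniformly on $[0,T_0]\times\bar\Omega$ for every $T_0$ — which is all that is needed, since by \ref{HypoOmega-GeoDist} and $K_{\min}>0$ the target $\Gamma$ is reached from every point in a uniformly bounded time. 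Standard Gr\"onwall-type comparison estimates then give locally uniform convergence of the value functions $\varphi_n$ of $\OCP(k_{Q_n})$ to the value function $\varphi$ of $\OCP(k_Q)$. Now take $\gamma_n\to\gamma$ in $\mathcal C$ with $\gamma_n\in\Opt(k_{Q_n},0,\gamma_n(0))$: the exit times $\tau_\Gamma(0,\gamma_n)$ are uniformly bounded, and passing to the limit in the dynamic programming equality $\varphi_n(h,\gamma_n(h))+h=\varphi_n(0,\gamma_n(0))$ of Proposition~\ref{PropOCP} (using $\varphi_n\to\varphi$ locally uniformly and $\gamma_n(h)\to\gamma(h)$) yields $\varphi(h,\gamma(h))+h=\varphi(0,\gamma(0))$ for every $h\in[0,\tau_\Gamma(0,\gamma)]$; moreover $\gamma$ is admissible for $k_Q$, because a uniform limit of admissible trajectories is admissible (the derivatives $\dot\gamma_n$ are bounded in norm by $K_{\max}$, admit a weak-$\ast$ $L^\infty$ limit equal to $\dot\gamma$, and $\abs{\dot\gamma(t)}\le k_Q(t,\gamma(t))$ a.e.\ since $\abs{\dot\gamma_n(t)}\le k_{Q_n}(t,\gamma_n(t))$ and $k_{Q_n}(\cdot,\gamma_n(\cdot))\to k_Q(\cdot,\gamma(\cdot))$ uniformly). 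The converse part of Proposition~\ref{PropOCP} then gives $\gamma\in\Opt(k_Q,0,\gamma(0))$. Consequently the set $\{(R,\gamma)\in\mathcal Q\times\mathcal C:\gamma\in\Opt(k_R,0,\gamma(0))\}$ is closed, and since the measures $\delta_{Q_n}\otimes\widetilde Q_n$ are concentrated there and converge weakly to $\delta_Q\otimes\widetilde Q$, the portmanteau theorem shows that $\delta_Q\otimes\widetilde Q$ is concentrated there too; as ${e_0}_{\#}\widetilde Q=m_0$ also passes to the limit, $\widetilde Q\in\Phi(Q)$.

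Finally, applying the Kakutani--Fan--Glicksberg fixed point theorem to $\Phi$ on the nonempty convex compact set $\mathcal Q$ produces a fixed point $Q\in\Phi(Q)$, which is the desired equilibrium. The main obstacle is the closed-graph step, and within it the joint stability of optimal trajectories when \emph{both} the initial point and the dynamics $k$ are perturbed; this is precisely where hypothesis~\ref{HypoOmega-GeoDist} (finite reachability time for $\Gamma$) and the uniform lower bound $K_{\min}>0$ are essential, since they keep all the relevant exit times uniformly bounded and make the convergence of the value functions strong enough to pass to the limit in the dynamic programming principle.
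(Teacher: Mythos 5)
This theorem is not proved in the paper itself --- it is recalled from \cite{Mazanti2019Minimal} --- and your fixed-point scheme (a best-response set-valued map on $\{Q \in \mathcal P(\mathcal C) : {e_0}_{\#}Q = m_0\}$ with $\mathcal C$ the $K_{\max}$-Lipschitz curves in $\bar\Omega$, nonemptiness of its values by measurable selection, closed graph via stability of optimal trajectories under uniform convergence of $k_{Q_n}$ and of the value functions, and then the Kakutani--Fan--Glicksberg theorem) is essentially the proof given in that reference, so your proposal is correct in approach and substance. The one slip is the claim that $\mathcal C$ itself is convex, which fails unless $\bar\Omega$ is convex (not assumed here, only \ref{HypoOmega-GeoDist}); this is harmless, since the argument only needs convexity of $\mathcal P(\mathcal C)$, of the constraint ${e_0}_{\#}Q = m_0$, and of the values of the best-response map, all of which hold regardless.
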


\section{Further properties of the optimal control problem}
\label{SecOptControl}

We provide in this section further properties of $\OCP(k)$ with the aim of providing a characterization of optimal controls. For that purpose, we assume the following additional hypothesis on $\Omega$.

\begin{hypotheses}[resume]
\item\label{HypoOmega-C11} The boundary $\partial\Omega$ is a compact $C^{1,1}$ manifold.
\end{hypotheses}

We will denote in the sequel by $d^{\pm}$ the signed distance to $\partial\Omega$, defined by $d^{\pm}(x) = d(x, \Omega) - d(x, \mathbb R^d \setminus\Omega)$, where $d(x, A) = \inf_{y \in A} \abs{x - y}$ for $A \subset \mathbb R^d$. Recall that, under assumption \ref{HypoOmega-C11}, $d^{\pm}$ is $C^{1, 1}$ in a neighborhood of $\partial\Omega$, its gradient has unit norm, and $\nabla d^{\pm}$ is a Lipschitz continuous function extending the exterior normal vector field of $\Omega$ to a neighborhood of $\partial\Omega$ (see, e.g., \cite{Delfour1994Shape}).

\subsection{Consequences of Pontryagin Maximum Principle}

In order to obtain additional properties of optimal trajectories, we apply Pontryagin Maximum Principle to a modified optimal control problem without state constraints, following the techniques from \cite{CannarsaCastelpietraCardaliaguet}. Assume that $k$ satisfies \ref{HypoOCP-k} and is extended to a Lipschitz continuous function defined on $\mathbb R_+ \times \mathbb R^d$. We also assume, with no loss of generality, that the extension of $k$ is $C^1$ on $\mathbb R_+ \times (\mathbb R^d \setminus \bar\Omega)$. For $\epsilon > 0$, define $k_\epsilon: \mathbb R_+ \times \mathbb R^d \to \mathbb R_+$ by
\begin{equation}
\label{eq:k-eps}
k_\epsilon(t, x) = k(t, x) \left(1 - \frac{1}{\epsilon} d(x, \Omega)\right)_+,
\end{equation}
where $a_+$ is defined by $a_+ = \max(0, a)$ for $a \in \mathbb R$. Consider the control system
\begin{equation}
\label{Control sys epsilon}
\dot\gamma_\epsilon(t) = k_\epsilon(t, \gamma_\epsilon(t)) u_\epsilon(t), \qquad \gamma_\epsilon(t) \in \mathbb R^d,\, u_\epsilon(t) \in B_1
\end{equation}
and the optimal control problem $\OCP_\epsilon(k_\epsilon)$ of, given $(t_0, x_0) \in \mathbb R_+ \times \mathbb R^d$, finding a measurable control $u_\epsilon$ such that the corresponding trajectory $\gamma_\epsilon$ solving \eqref{Control sys epsilon} reaches $\Gamma$ in minimal time. The next lemma states the main consequences of Pontryagin Maximum Principle when applied to $\OCP_\epsilon(k_\epsilon)$.

\begin{lemma}\label{Conse Pontryagin}
Consider $\OCP_\epsilon(k_\epsilon)$ under assumptions \ref{HypoOmega-GeoDist}, \ref{HypoOCP-k}, and \ref{HypoOmega-C11} and with $k_\epsilon$ defined by \eqref{eq:k-eps}. Let $(t_0,x_0) \in \mathbb R_+ \times \bar{\Omega}$, $\gamma_{\epsilon}$ be an optimal trajectory for $\OCP_\epsilon(k_\epsilon)$, $T_\epsilon$ be the first exit time of $\gamma_\epsilon$, and $u_\epsilon: [t_0, t_0+T_{\epsilon}] \to B_1$ be an optimal control associated with $\gamma_{\epsilon}$. Then $d(\gamma_{\epsilon}(t), \Omega) < \epsilon$ for every $t\in [t_0,t_0+T_{\epsilon}]$ and there exist $\lambda_{\epsilon}\in \{0,1\}$ and absolutely continuous functions $p_{\epsilon}:[t_0,t_0+T_{\epsilon}]\to \mathbb R^d$ and $q_{\epsilon}:[t_0,t_0+T_{\epsilon}]\to \mathbb R$ such that, for a.e.\ $t \in [t_0, t_0 + T_\epsilon]$,
\begin{subequations}
\label{eq:cons-Pontryagin}
\begin{align}
\dot q_\epsilon(t) & \in \abs{p_\epsilon(t)} \pi_1 \partial^C k_\epsilon(t, \gamma_\epsilon(t)), \\
-\dot p_\epsilon(t) & \in \abs{p_\epsilon(t)} \pi_2 \partial^C k_\epsilon(t, \gamma_\epsilon(t)), \label{eq:cons-Pontryagin:p-epsilon} \\
q_\epsilon(t) & = \abs{p_\epsilon(t)} k_\epsilon(t, \gamma_\epsilon(t)) - \lambda_\epsilon, \label{eq:cons-Pontryagin:hamiltonian}\\
\max_{w \in B_1} p_\epsilon(t) \cdot w & = p_\epsilon(t) \cdot u_\epsilon(t), \label{eq:cons-Pontryagin:maximization} \\
q_\epsilon(t_0 + T_\epsilon) & = 0, \label{eq:cons-Pontryagin:finalTime} \\
\lambda_\epsilon + \max_{t \in [t_0, t_0 + T_\epsilon]} \abs{p_\epsilon(t)} & > 0, \label{eq:cons-Pontryagin:nontrivial}
\end{align}
\end{subequations}
where $\partial^C$ denotes Clarke's gradient (see \cite{Clarke} for its definition) and $\pi_1, \pi_2$ are the projections onto the first and second factors of the product $\mathbb R \times \mathbb R^d$, respectively.
\end{lemma}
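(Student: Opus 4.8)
The plan is to combine an elementary invariance estimate with the nonsmooth Pontryagin Maximum Principle of \cite{Clarke}, following the strategy of \cite{CannarsaCastelpietraCardaliaguet}. I would proceed in two steps: first proving that the optimal trajectory remains within distance $\epsilon$ of $\Omega$, and then applying the maximum principle to $\OCP_\epsilon(k_\epsilon)$ and reading off the conclusions \eqref{eq:cons-Pontryagin}.

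For the first step, I would argue as follows. The value of $\OCP_\epsilon(k_\epsilon)$ at $(t_0, x_0)$ is finite, so $T_\epsilon < +\infty$: since $\Gamma$ is a nonempty subset of $\bar\Omega$ and $x_0 \in \bar\Omega$, hypothesis \ref{HypoOmega-GeoDist} provides a curve in $\bar\Omega$ joining $x_0$ to a point of $\Gamma$ of length at most $D \operatorname{diam}\bar\Omega$, and $k_\epsilon = k \geq K_{\min}$ on $\bar\Omega$ (where $d(\cdot, \Omega) = 0$), so running along that curve at maximal speed reaches $\Gamma$ in finite time. Next, I would set $\delta(t) = d(\gamma_\epsilon(t), \Omega)$; since $d(\cdot, \Omega)$ is $1$-Lipschitz and $\abs{\dot\gamma_\epsilon(t)} = k_\epsilon(t, \gamma_\epsilon(t)) \abs{u_\epsilon(t)} \leq k_\epsilon(t, \gamma_\epsilon(t))$, the function $\delta$ is absolutely continuous and, by \eqref{eq:k-eps}, $\abs{\dot\delta(t)} \leq k(t, \gamma_\epsilon(t)) \bigl(1 - \delta(t)/\epsilon\bigr)_+ \leq M \bigl(1 - \delta(t)/\epsilon\bigr)_+$ for a.e.\ $t$, where $M$ bounds $k$ on the compact set $\{x \in \mathbb R^d : d(x, \Omega) \leq \epsilon\}$. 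As $\delta(t_0) = d(x_0, \Omega) = 0$, comparison with the solution $y(t) = \epsilon\bigl(1 - e^{-M(t - t_0)/\epsilon}\bigr)$ of $\dot y = M(1 - y/\epsilon)$, $y(t_0) = 0$ (whose right-hand side is Lipschitz in $y$), yields $\delta(t) \leq y(t) < \epsilon$ for every $t \in [t_0, t_0 + T_\epsilon]$, which is the first assertion and justifies a posteriori the use of $M$.

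For the second step, observe that $\OCP_\epsilon(k_\epsilon)$ is a free-terminal-time minimal-time problem with target $\Gamma$, compact control set $B_1$, and dynamics $f(t, x, u) = k_\epsilon(t, x) u$ that is Lipschitz continuous in $(t, x)$ and linear in $u$. I would then apply the nonsmooth Pontryagin Maximum Principle of \cite{Clarke} --- after the standard device of adjoining the time variable as an extra state to make the system autonomous --- to obtain $\lambda_\epsilon \geq 0$, normalizable to lie in $\{0, 1\}$, and an absolutely continuous arc $p_\epsilon : [t_0, t_0 + T_\epsilon] \to \mathbb R^d$, with $(\lambda_\epsilon, p_\epsilon)$ nontrivial (that is, \eqref{eq:cons-Pontryagin:nontrivial}), such that for a.e.\ $t$ the adjoint inclusion $-\dot p_\epsilon(t) \in \partial_x^C\bigl(p_\epsilon(t) \cdot f(t, \gamma_\epsilon(t), u_\epsilon(t))\bigr)$ holds, the control maximizes $w \mapsto p_\epsilon(t) \cdot f(t, \gamma_\epsilon(t), w)$ over $B_1$, and the maximized cost-inclusive Hamiltonian vanishes at the free final time $t_0 + T_\epsilon$. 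Since $k_\epsilon \geq 0$, the maximization is exactly \eqref{eq:cons-Pontryagin:maximization} and gives $p_\epsilon(t) \cdot u_\epsilon(t) = \abs{p_\epsilon(t)}$; substituting $f = k_\epsilon u$ and using $\partial^C(c\,g) = c\,\partial^C g$ for the nonnegative scalar $c = p_\epsilon(t) \cdot u_\epsilon(t)$, the adjoint inclusion becomes \eqref{eq:cons-Pontryagin:p-epsilon}. Setting $q_\epsilon(t) = \abs{p_\epsilon(t)} k_\epsilon(t, \gamma_\epsilon(t)) - \lambda_\epsilon$, i.e.\ the value along $(\gamma_\epsilon, p_\epsilon)$ of the maximized Hamiltonian $\max_{w \in B_1} p \cdot k_\epsilon(t, x) w - \lambda_\epsilon = \abs{p} k_\epsilon(t, x) - \lambda_\epsilon$, yields \eqref{eq:cons-Pontryagin:hamiltonian}, while the transversality condition is precisely \eqref{eq:cons-Pontryagin:finalTime}; and the standard fact that along an extremal the total derivative of $t \mapsto q_\epsilon(t)$ reduces, via the adjoint and maximization relations, to the generalized partial derivative in $t$ shows that $q_\epsilon$ is absolutely continuous with $\dot q_\epsilon(t) \in \abs{p_\epsilon(t)} \pi_1 \partial^C k_\epsilon(t, \gamma_\epsilon(t))$ for a.e.\ $t$, which is the last inclusion in \eqref{eq:cons-Pontryagin}.

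The hard part will be this second step: matching the hypotheses of the precise nonsmooth-PMP statement used (measurability and local Lipschitz rank of $f$, free terminal time, target constraint $\gamma_\epsilon(t_0 + T_\epsilon) \in \Gamma$ with $\Gamma$ only closed), and correctly translating its abstract conclusion --- phrased through Clarke generalized gradients of the pre-Hamiltonian --- into the explicit inclusions \eqref{eq:cons-Pontryagin} with the projections $\pi_1, \pi_2$, in particular justifying the absolute continuity of $q_\epsilon$ and the formula for $\dot q_\epsilon$. The rest is routine; one may moreover observe that necessarily $\lambda_\epsilon = 1$, since $\lambda_\epsilon = 0$ would force, via \eqref{eq:cons-Pontryagin:finalTime} together with $k_\epsilon = k \geq K_{\min} > 0$ at $\gamma_\epsilon(t_0 + T_\epsilon) \in \Gamma \subset \bar\Omega$, that $p_\epsilon(t_0 + T_\epsilon) = 0$ and hence, by the linear inclusion \eqref{eq:cons-Pontryagin:p-epsilon} and Gronwall's inequality, that $p_\epsilon \equiv 0$, contradicting \eqref{eq:cons-Pontryagin:nontrivial}.
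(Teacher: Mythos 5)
Your proposal is correct and takes essentially the same approach as the paper: first the invariance estimate $d(\gamma_\epsilon(t), \Omega) < \epsilon$, then an application of Clarke's nonsmooth maximum principle (the paper invokes \cite[Theorem~5.2.3]{Clarke}) to the time-augmented autonomous system, reading off \eqref{eq:cons-Pontryagin} with $q_\epsilon$ interpreted, exactly as you do, as the maximized Hamiltonian along the extremal. The only divergence is minor: where the paper rules out $d(\gamma_\epsilon(t), \Omega) \geq \epsilon$ by noting the trajectory would be stuck in the region where $k_\epsilon \equiv 0$, you use a Gr\"onwall/comparison bound on $t \mapsto d(\gamma_\epsilon(t), \Omega)$, which is an equally valid and in fact more quantitative argument, and your closing observation that $\lambda_\epsilon = 1$ correctly anticipates the argument of the paper's next lemma.
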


The proof of Lemma~\ref{Conse Pontryagin} is standard and can be carried out by showing first that $d(\gamma_{\epsilon}(t), \Omega) < \epsilon$ for every $t\in [t_0,t_0+T_{\epsilon}]$, which holds since, otherwise, $\gamma_\epsilon$ would belong, for some time, to a region outside of $\bar\Omega$ where $k_\epsilon$ is identically zero, and hence $\gamma_\epsilon$ would be constant, contradicting its optimality. With this fact, we can apply \cite[Theorem~5.2.3]{Clarke} to the autonomous augmented system $\frac{d}{dt}\bigl(t, \gamma_\epsilon(t)\bigr) = \bigl(1, k_\epsilon(t, \gamma_\epsilon(t)) u_\epsilon(t)\bigr)$ and deduce \eqref{eq:cons-Pontryagin} from its conclusions.

As a consequence of Lemma~\ref{Conse Pontryagin}, we obtain the following properties of optimal controls for $\OCP_\epsilon(k_\epsilon)$.

\begin{lemma}
Under the assumption and notations of Lem\-ma~\ref{Conse Pontryagin}, for every $t \in [t_0, t_0 + T_\epsilon]$, we have $\abs{p_{\epsilon}(t)} \neq 0$ and $u_\epsilon(t) = \frac{p_\epsilon(t)}{\abs{p_\epsilon(t)}}$. As a consequence, $u_\epsilon$ is Lipschitz continuous and $\gamma_\epsilon$ is $C^{1, 1}$, and the Lipschitz constant of $u_\epsilon$ depends only on $\epsilon$, $K_{\max}$, and the Lipschitz constant of $k$.
\end{lemma}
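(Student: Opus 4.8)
The plan is to exploit the optimality conditions in Lemma~\ref{Conse Pontryagin} together with the structure of the minimal-time problem. First I would argue that $\abs{p_\epsilon(t)}$ cannot vanish at any $t \in [t_0, t_0+T_\epsilon]$. Suppose $p_\epsilon(t_1) = 0$ for some $t_1$. By \eqref{eq:cons-Pontryagin:p-epsilon}, $\dot p_\epsilon$ is, up to the factor $\abs{p_\epsilon}$, governed by a bounded (Clarke gradient of the Lipschitz function $k_\epsilon$) multivalued map, so $t \mapsto \abs{p_\epsilon(t)}$ satisfies a differential inequality of the form $\abs{\dot{\abs{p_\epsilon}}(t)} \leq L \abs{p_\epsilon(t)}$ for a.e.\ $t$, where $L$ bounds the relevant component of $\partial^C k_\epsilon$ (finite because $k_\epsilon$ is Lipschitz, with constant depending on $\epsilon$, $K_{\max}$, and $\Lip(k)$). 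Grönwall's inequality then forces $p_\epsilon \equiv 0$ on the whole interval. Combined with \eqref{eq:cons-Pontryagin:hamiltonian} and \eqref{eq:cons-Pontryagin:finalTime}, $p_\epsilon \equiv 0$ gives $q_\epsilon(t) = -\lambda_\epsilon$ for all $t$ and $q_\epsilon(t_0+T_\epsilon) = 0$, hence $\lambda_\epsilon = 0$; but then $\lambda_\epsilon + \max_t \abs{p_\epsilon(t)} = 0$, contradicting the nontriviality condition \eqref{eq:cons-Pontryagin:nontrivial}. Therefore $\abs{p_\epsilon(t)} \neq 0$ throughout.

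Once $p_\epsilon(t) \neq 0$, the maximization condition \eqref{eq:cons-Pontryagin:maximization} reads $\max_{w \in B_1} p_\epsilon(t) \cdot w = \abs{p_\epsilon(t)} = p_\epsilon(t) \cdot u_\epsilon(t)$, and since the maximizer of a nonzero linear functional over the closed unit ball is unique, we get $u_\epsilon(t) = p_\epsilon(t)/\abs{p_\epsilon(t)}$ for a.e.\ $t$ (and we may take this as the pointwise definition of $u_\epsilon$, which is consistent with the conventions fixed earlier).

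Next I would establish the regularity claims. The function $p_\epsilon$ is absolutely continuous and, by the argument above, satisfies $K_{\min} \leq \abs{p_\epsilon(t)}$ bounded below away from zero on $[t_0, t_0+T_\epsilon]$ — more precisely, by Grönwall again one gets a lower bound $\abs{p_\epsilon(t)} \geq \abs{p_\epsilon(s)} e^{-L\abs{t-s}}$, so a positive lower bound on the whole (compact) interval follows from positivity at a single point, e.g.\ via \eqref{eq:cons-Pontryagin:hamiltonian}--\eqref{eq:cons-Pontryagin:finalTime} which give $\abs{p_\epsilon(t_0+T_\epsilon)} k_\epsilon(t_0+T_\epsilon, \gamma_\epsilon(t_0+T_\epsilon)) = \lambda_\epsilon$, forcing $\lambda_\epsilon = 1$ and $\abs{p_\epsilon(t_0+T_\epsilon)} = 1/k_\epsilon(\cdots) \geq 1/K_{\max}$. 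The map $v \mapsto v/\abs{v}$ is Lipschitz on the region $\{\abs{v} \geq c\}$ with constant $1/c$; since $t \mapsto p_\epsilon(t)$ is Lipschitz (its a.e.-derivative is bounded by $L \abs{p_\epsilon(t)} \leq L \cdot \text{const}$), the composition $u_\epsilon = p_\epsilon/\abs{p_\epsilon}$ is Lipschitz, with a constant depending only on $L$ and the lower bound on $\abs{p_\epsilon}$, hence ultimately on $\epsilon$, $K_{\max}$, and $\Lip(k)$. Finally $\dot\gamma_\epsilon(t) = k_\epsilon(t, \gamma_\epsilon(t)) u_\epsilon(t)$ is a product of the Lipschitz functions $(t, \gamma_\epsilon(t)) \mapsto k_\epsilon(t, \gamma_\epsilon(t))$ (Lipschitz since $\gamma_\epsilon$ is Lipschitz and $k_\epsilon$ is Lipschitz) and the Lipschitz function $u_\epsilon$, so $\dot\gamma_\epsilon$ is Lipschitz and $\gamma_\epsilon \in C^{1,1}$.

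The main obstacle is the nonvanishing of $p_\epsilon$: the adjoint inclusion only controls $\dot p_\epsilon$ in terms of $\abs{p_\epsilon}$ through Clarke's gradient, so one has to be careful that the bound $L$ on $\pi_2 \partial^C k_\epsilon$ is genuinely finite (it is, because $k_\epsilon$ is globally Lipschitz for fixed $\epsilon$, even though its Lipschitz constant blows up as $\epsilon \to 0$) and then combine the Grönwall argument with the transversality and nontriviality conditions to rule out the degenerate case. Everything after that is a routine composition-of-Lipschitz-functions argument, with the only point of attention being the explicit dependence of the Lipschitz constant of $u_\epsilon$ on the stated quantities.
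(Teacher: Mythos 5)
Your argument for the nonvanishing of $p_\epsilon$ is exactly the paper's: the bound $\abs{\zeta}\le \Lip(k)+K_{\max}/\epsilon$ for $\zeta\in\pi_2\partial^C k_\epsilon$, Gr\"onwall applied to \eqref{eq:cons-Pontryagin:p-epsilon} to propagate a zero of $p_\epsilon$ to the whole interval, and then the contradiction obtained from \eqref{eq:cons-Pontryagin:hamiltonian}, \eqref{eq:cons-Pontryagin:finalTime}, and \eqref{eq:cons-Pontryagin:nontrivial}. The identification $u_\epsilon=p_\epsilon/\abs{p_\epsilon}$ from \eqref{eq:cons-Pontryagin:maximization} is also the same. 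Where you diverge is the regularity step, and there your route falls short of the statement as written. You deduce Lipschitz continuity of $u_\epsilon$ by composing the Lipschitz map $p_\epsilon$ with $v\mapsto v/\abs{v}$ on a region $\{\abs{v}\ge c\}$; but the constant this produces is of order $\Lip(p_\epsilon)/c$, and both $\Lip(p_\epsilon)\le(\Lip(k)+K_{\max}/\epsilon)\sup\abs{p_\epsilon}$ and the lower bound $c$ come from Gr\"onwall estimates anchored at the final time, so the resulting constant contains the ratio $\sup\abs{p_\epsilon}/\inf\abs{p_\epsilon}$, which is only controlled by $e^{2(\Lip(k)+K_{\max}/\epsilon)T_\epsilon}$ (and your anchor value also brings in $K_{\min}$). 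Thus you prove that $u_\epsilon$ is Lipschitz, but with a constant depending on $T_\epsilon$ (hence on the initial condition and on further data such as $K_{\min}$ and the geometry of $\Omega$), whereas the lemma asserts dependence only on $\epsilon$, $K_{\max}$, and $\Lip(k)$ --- a uniformity that is then carried over verbatim to optimal controls of $\OCP(k)$ in Proposition~\ref{PropEquivEpsilon}.

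The paper gets the sharp dependence by differentiating the formula for $u_\epsilon$ rather than composing Lipschitz maps: writing $-\dot p_\epsilon(t)=\abs{p_\epsilon(t)}\,b_\epsilon(t)$ with $b_\epsilon(t)\in\pi_2\partial^C k_\epsilon(t,\gamma_\epsilon(t))$, a direct computation gives
\begin{equation*}
\dot u_\epsilon(t) = -b_\epsilon(t) + \bigl(u_\epsilon(t)\cdot b_\epsilon(t)\bigr)u_\epsilon(t),
\end{equation*}
in which the factor $\abs{p_\epsilon(t)}$ cancels; since this is (minus) the component of $b_\epsilon(t)$ orthogonal to the unit vector $u_\epsilon(t)$, one gets $\abs{\dot u_\epsilon(t)}\le \abs{b_\epsilon(t)}\le \Lip(k)+K_{\max}/\epsilon$, which is precisely the claimed dependence, uniform in $(t_0,x_0)$ and in the length of the time interval. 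To repair your proof you should replace the composition argument by this cancellation (your bounds on $\abs{p_\epsilon}$ are then no longer needed for the regularity step). The remaining points --- the a.e.\ versus everywhere identification of $u_\epsilon$ and the $C^{1,1}$ regularity of $\gamma_\epsilon$ as a product of Lipschitz functions --- are fine.
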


\begin{proof}
Let $L$ be the Lipschitz constant of $k$. From the definition of $k_\epsilon$ and standard properties of Clarke's gradient (see, e.g., \cite[Proposition~2.1.2]{Clarke}), we have that $\abs{\zeta} \leq L + \frac{K_{\max}}{\epsilon}$ for every $(t, x) \in \mathbb R_+ \times \mathbb R^d$ and $\zeta \in \pi_2 \partial^C k_\epsilon(t, x)$. Hence, integrating \eqref{eq:cons-Pontryagin:p-epsilon}, we deduce that, for every $t, t_1 \in [t_0, t_0+T_{\epsilon}]$,
$$
\abs{p_{\epsilon}(t)} \le \abs{p_{\epsilon}(t_1)}+\left(L+\frac{K_{max}}{\epsilon}\right) \int_{\min\{t,t_1\}}^{\max\{t,t_1\}} \abs{p_{\epsilon}(s)} ds.
$$
Hence, by Grönwall's inequality, for every $t, t_1 \in [t_0,t_0+T_{\epsilon}]$,
$$
\abs{p_{\epsilon}(t)} \le \abs{p_{\epsilon}(t_1)} e^{\left(L+\frac{K_{max}}{\epsilon}\right)\abs{t-t_1}}.
$$
If there exists $t_1 \in [t_0, t_0 + T_\epsilon]$ such that $p_{\epsilon}(t_1)=0$, then $p_{\epsilon}(t)=0$ for every $t\in [t_0,t_0+T_{\epsilon}]$. Thus, by \eqref{eq:cons-Pontryagin:hamiltonian}, $q_{\epsilon}(t) = -\lambda_{\epsilon}$ for every $t\in [t_0,t_0+T_{\epsilon}]$, and since $q_{\epsilon}(t_0 + T_{\epsilon})=0$ by \eqref{eq:cons-Pontryagin:finalTime}, it follows that $\lambda_{\epsilon} = 0$, which contradicts \eqref{eq:cons-Pontryagin:nontrivial}, establishing thus that $\abs{p_{\epsilon}(t)}\neq 0$ for every $t\in [t_0,t_0+T_{\epsilon}]$.

Thanks to this fact, one deduces immediately from \eqref{eq:cons-Pontryagin:maximization} that $u_\epsilon(t) = \frac{p_\epsilon(t)}{\abs{p_\epsilon(t)}}$. Denoting by $b_\epsilon: [t_0, t_0 + T_\epsilon] \to \mathbb R^d$ a measurable function such that $-\dot p_\epsilon(t) = \abs{p_\epsilon(t)} b_\epsilon(t)$ for a.e.\ $t \in [t_0, t_0 + T_\epsilon]$, we deduce that, for a.e.\ $t \in [t_0, t_0 + T_\epsilon]$,
\[
\dot u_\epsilon(t) = - b_\epsilon(t) + (u_\epsilon(t) \cdot b_\epsilon(t)) u_\epsilon(t).
\]
Since $b_\epsilon(t) \in \pi_2 \partial^C k_\epsilon(t, \gamma_\epsilon(t))$ for a.e.\ $t \in [t_0, t_0 + T_\epsilon]$, we conclude that $\abs{\dot u_\epsilon(t)} \leq L + \frac{K_{\max}}{\epsilon}$, showing that $u$ is Lipschitz continuous, as required.
\end{proof}

Similarly to \cite{CannarsaCastelpietraCardaliaguet}, we now establish the main link between $\OCP(k)$ and $\OCP_{\epsilon}(k_\epsilon)$.

\begin{proposition}\label{PropEquivEpsilon}
Consider $\OCP(k)$ under the assumptions \ref{HypoOmega-GeoDist}, \ref{HypoOCP-k}, and \ref{HypoOmega-C11}, as well as the problem $\OCP_\epsilon(k_\epsilon)$ with $k_\epsilon$ defined by \eqref{eq:k-eps}. There exists $\epsilon_0 > 0$ such that, for every $\epsilon \in (0, \epsilon_0)$ and $(t_0,x_0)\in \mathbb R\times \bar{\Omega}$, the following properties hold.
\begin{enumerate}
\item\label{equiv-OCP-1} If $\gamma_\epsilon$ is an optimal trajectory for $\OCP_\epsilon(k_\epsilon)$ starting from $(t_0, x_0)$, then $\gamma_\epsilon(t) \in \bar\Omega$ for every $t \geq 0$.
\item\label{equiv-OCP-2} If $\gamma \in \Opt(k, t_0, x_0)$, then $\gamma$ is an optimal trajectory for $\OCP_\epsilon(k_\epsilon)$.
\end{enumerate}
As a consequence, if $\gamma \in \Opt(k, t_0, x_0)$ and $u$ is its associated optimal control, then $\gamma$ is $C^{1, 1}$ and $u$ is Lipschitz continuous on $[t_0, t_0 + \tau_\Gamma(t_0, \gamma)]$, and the Lipschitz constant of $u$ depends only on $\epsilon_0$, $K_{\max}$, and the Lipschitz constant of $k$.
\end{proposition}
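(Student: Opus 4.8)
The plan is to prove the three assertions in order, the first one being the crux; the second then follows from it by a soft comparison of the two problems, and the last one by combining the second with the lemma that precedes the proposition.

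For the \emph{first assertion}, I would argue as follows. Choose $\epsilon_0>0$ small enough that the signed distance $d^{\pm}$ is $C^{1,1}$ on the tubular neighborhood $\mathcal N=\{x\in\mathbb R^d:d(x,\partial\Omega)<\epsilon_0\}$, and let $\kappa$ bound $\|D^2 d^{\pm}\|$ on $\mathcal N$ (we shrink $\epsilon_0$ once more below). Let $\gamma_\epsilon$ be optimal for $\OCP_\epsilon(k_\epsilon)$ from $(t_0,x_0)\in\mathbb R_+\times\bar\Omega$, with first exit time $T_\epsilon$, and pick $p_\epsilon,q_\epsilon,\lambda_\epsilon$ as in Lemma~\ref{Conse Pontryagin}; recall from that lemma and the one following it that $d(\gamma_\epsilon(t),\Omega)<\epsilon$ on $[t_0,t_0+T_\epsilon]$, that $p_\epsilon$ is Lipschitz and never vanishes, that $\gamma_\epsilon$ is $C^{1,1}$, and that $\dot\gamma_\epsilon=k_\epsilon(\cdot,\gamma_\epsilon)\,p_\epsilon/\abs{p_\epsilon}$. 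Set $g(t)=d^{\pm}(\gamma_\epsilon(t))$; it is continuous, and $C^{1,1}$ wherever $\gamma_\epsilon(t)\notin\bar\Omega$, since there $g(t)=d(\gamma_\epsilon(t),\Omega)<\epsilon<\epsilon_0$ so $\gamma_\epsilon(t)\in\mathcal N$. Let $U=\{t\in(t_0,t_0+T_\epsilon):g(t)>0\}$; its endpoints are excluded because $x_0\in\bar\Omega$ and $\gamma_\epsilon(t_0+T_\epsilon)\in\Gamma\subset\bar\Omega$, so on each connected component $(\alpha,\beta)$ of $U$ one has $g>0$ while $g(\alpha)=g(\beta)=0$; the goal is to show $U=\varnothing$. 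Introduce $h(t)=p_\epsilon(t)\cdot\nabla d^{\pm}(\gamma_\epsilon(t))$, so that $\dot g=(k_\epsilon(\cdot,\gamma_\epsilon)/\abs{p_\epsilon})\,h$ with strictly positive prefactor. On $U$, $k_\epsilon(t,x)=k(t,x)\,(1-d^{\pm}(x)/\epsilon)$ is $C^1$ near $\gamma_\epsilon(t)$, so the adjoint inclusion \eqref{eq:cons-Pontryagin:p-epsilon} becomes the equation $\dot p_\epsilon=-\abs{p_\epsilon}\bigl((1-g/\epsilon)\nabla_x k-(k/\epsilon)\nabla d^{\pm}\bigr)$ (all evaluated at $(t,\gamma_\epsilon(t))$); feeding this and the formula for $\dot\gamma_\epsilon$ into $\dot h$, a short computation using $\abs{\nabla d^{\pm}}\equiv 1$, $\|D^2 d^{\pm}\|\leq\kappa$, $\abs{\nabla_x k}\leq L$ (the Lipschitz constant of $k$), $k\geq K_{\min}$, and $k_\epsilon\leq K_{\max}$ gives $\dot h\geq\abs{p_\epsilon}\,(K_{\min}/\epsilon-L-K_{\max}\kappa)$ a.e.\ on $U$. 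Shrinking $\epsilon_0$ so that $K_{\min}/\epsilon_0\geq L+K_{\max}\kappa$, this lower bound is positive for $\epsilon\in(0,\epsilon_0)$; since $h$ is locally Lipschitz on $U$ and $\abs{p_\epsilon}>0$, $h$ is strictly increasing on each $(\alpha,\beta)$. Because $g(\alpha)=0$ and $g>0$ just to the right of $\alpha$, $\dot g$ — hence $h$ — is positive at some point of $(\alpha,\beta)$, so by monotonicity $h>0$, hence $\dot g>0$, on all of $(\alpha,\beta)$ (were $h\leq 0$ somewhere, $h$ would be negative just after $\alpha$, forcing $g<0$ there, contradicting $g>0$); but then $g$ is strictly increasing on $(\alpha,\beta)$, giving $0=g(\beta)>\lim_{t\to\alpha^+}g(t)=0$, absurd. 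Hence $U=\varnothing$.

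The \emph{remaining two assertions} are then easy. Since $k_\epsilon=k$ on $\bar\Omega$, every admissible trajectory of $\OCP(k)$ from $(t_0,x_0)$ is admissible for $\OCP_\epsilon(k_\epsilon)$ with the same first exit time, so the value $V_\epsilon(t_0,x_0)$ of $\OCP_\epsilon(k_\epsilon)$ satisfies $V_\epsilon(t_0,x_0)\leq\varphi(t_0,x_0)$; conversely, a standard direct-method argument (equi-Lipschitz trajectories of speed at most $K_{\max}$, closedness of $\Gamma$, continuity of $k_\epsilon$; cf.\ \cite[Section~4]{Mazanti2019Minimal}) produces an optimal trajectory of $\OCP_\epsilon(k_\epsilon)$ from $(t_0,x_0)$, which by the first assertion stays in $\bar\Omega$, hence is admissible for $\OCP(k)$, giving $\varphi(t_0,x_0)\leq V_\epsilon(t_0,x_0)$; thus $V_\epsilon(t_0,x_0)=\varphi(t_0,x_0)$. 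Consequently, if $\gamma\in\Opt(k,t_0,x_0)$, then $\gamma$ stays in $\bar\Omega$, so it is admissible for $\OCP_\epsilon(k_\epsilon)$ with first exit time $\tau_\Gamma(t_0,\gamma)=\varphi(t_0,x_0)=V_\epsilon(t_0,x_0)$, i.e.\ it is optimal for $\OCP_\epsilon(k_\epsilon)$, which is the second assertion. For the last one, fix $\epsilon=\epsilon_0/2$: as $\gamma$ stays in $\bar\Omega$ where $k_\epsilon=k\geq K_{\min}>0$, the control of $\gamma$ viewed as a trajectory of \eqref{Control sys epsilon} coincides with $u$ and $\gamma$ has first exit time $\tau_\Gamma(t_0,\gamma)$ there as well, so the lemma preceding the proposition applies and gives that $\gamma$ is $C^{1,1}$ and $u$ is Lipschitz on $[t_0,t_0+\tau_\Gamma(t_0,\gamma)]$ with a Lipschitz constant depending only on $\epsilon_0$, $K_{\max}$, and the Lipschitz constant of $k$.

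The \emph{main obstacle} is the computation of $\dot h$ on $U$ in the proof of the first assertion: one must differentiate the merely $C^{0,1}$ vector field $\nabla d^{\pm}$ along the $C^{1,1}$ curve $\gamma_\epsilon$ (a chain rule that holds only a.e.) and must use that Clarke's gradient $\partial^C k_\epsilon$ reduces to the classical gradient along $\gamma_\epsilon$ wherever $\gamma_\epsilon\notin\bar\Omega$ — both being consequences of Hypothesis~\ref{HypoOmega-C11} and of the $C^1$ extension of $k$ outside $\bar\Omega$, and handled as in \cite{CannarsaCastelpietraCardaliaguet}. The accompanying quantitative point is that $\epsilon_0$ must be chosen small in terms of $K_{\min}$, $K_{\max}$, $L$, and $\kappa$ precisely so that $K_{\min}/\epsilon-L-K_{\max}\kappa>0$; this is where the penalization \eqref{eq:k-eps} is revealed to generate an inward drift strong enough to confine the optimal trajectories of $\OCP_\epsilon(k_\epsilon)$ to $\bar\Omega$.
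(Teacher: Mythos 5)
Your proof is correct and follows essentially the same route as the paper: penalization via $k_\epsilon$, the adjoint quantity $p_\epsilon\cdot\nabla d^{\pm}(\gamma_\epsilon)$ shown to be strictly increasing during any excursion outside $\bar\Omega$ once $\epsilon_0$ is small in terms of $K_{\min}$, $K_{\max}$, $L$ and the curvature bound, and the same value-comparison argument for part~\ref{equiv-OCP-2} and the Lipschitz consequence via Lemma~\ref{Conse Pontryagin} and its corollary. The only (harmless) difference is cosmetic: you close the contradiction in part~\ref{equiv-OCP-1} through monotonicity of $h$ combined with the sign of $g$ near the left endpoint, whereas the paper uses the nonnegativity/nonpositivity of $\dot g$ at the two endpoints of the excursion interval.
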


\begin{proof}
To prove \ref{equiv-OCP-1}, let $T_\epsilon$ be the first exit time of $\gamma_\epsilon$ and assume, to obtain a contradiction, that there exist $a, b \in [t_0, t_0 + T_\epsilon]$ such that $a < b$, $\gamma_\epsilon(t) \notin \bar\Omega$ for $t \in (a, b)$, and $\gamma_\epsilon(t) \in \partial\Omega$ for $t \in \{a, b\}$ (recall that $\Gamma \subset \bar\Omega$ and $x_0 \in \bar\Omega$, so $\gamma_\epsilon$ starts and ends its movement in $\bar\Omega$). The map $t \mapsto d^{\pm}(\gamma_\epsilon(t))$ is differentiable in a neighborhood of $[a, b]$, strictly positive for $t \in (a, b)$, and equal to $0$ for $t \in \{a, b\}$, and thus its derivative is nonnegative at $a$ and nonpositive at $b$, i.e.,
\[
\dot\gamma_\epsilon(a) \cdot \nabla d^{\pm} (\gamma_\epsilon(a)) \geq 0, \qquad \dot\gamma_\epsilon(b) \cdot \nabla d^{\pm} (\gamma_\epsilon(b)) \leq 0.
\]
Since $\dot\gamma_\epsilon(t) = k_\epsilon(t, \gamma_\epsilon(t)) \frac{p_\epsilon(t)}{\abs{p_\epsilon(t)}}$ and $\frac{k_\epsilon(t, \gamma_\epsilon(t))}{\abs{p_\epsilon(t)}} > 0$ for every $t \in [t_0, t_0 + T_\epsilon]$, we have
\begin{equation}
\label{AlphaInAandB}
p_\epsilon(a) \cdot \nabla d^{\pm}(\gamma_\epsilon(a)) \geq 0, \qquad p_\epsilon(b) \cdot \nabla d^{\pm}(\gamma_\epsilon(b)) \leq 0.
\end{equation}

Consider the map $\alpha: t \mapsto p_\epsilon(t) \cdot \nabla d^{\pm}(\gamma_\epsilon(t))$. Since $d^{\pm}$ is $C^{1, 1}$ in a neighborhood of $\partial\Omega$ and $d(\gamma_\epsilon(t), \Omega) \leq \epsilon$ for every $t \in [t_0, t_0 + T_\epsilon]$ by Lemma~\ref{Conse Pontryagin}, if $\varepsilon_0 > 0$ is small enough, we deduce that $\gamma_\epsilon(t)$ belongs to the neighborhood at which $d^{\pm}$ is $C^{1, 1}$ for every $t \in (a, b)$. Thus $\alpha$ is absolutely continuous on $[a, b]$ and, recalling that $k$ is $C^1$ on $\mathbb R_+ \times (\mathbb R^d \setminus \bar\Omega)$ and using \eqref{eq:cons-Pontryagin:p-epsilon}, we have, for $t \in (a, b)$,
\begin{multline*}
\dot\alpha(t) = \dot p_\epsilon(t) \cdot \nabla d^{\pm}(\gamma_\epsilon(t)) + p_\epsilon(t) \cdot \frac{d \left[\nabla d^{\pm} \circ \gamma_\epsilon\right]}{d t}(t) \displaybreak[0] \\
 = - \abs{p_\epsilon(t)} \left(1 - \frac{1}{\epsilon} d(\gamma_\epsilon(t), \Omega)\right)_+ \nabla_x k(t, \gamma_\epsilon(t)) \cdot \nabla d^{\pm}(\gamma_\epsilon(t)) \\
 {} + \abs{p_\epsilon(t)} \frac{1}{\epsilon} k(t, \gamma_\epsilon(t)) \abs{\nabla d^{\pm}(\gamma_\epsilon(t))}^2 + p_\epsilon(t) \cdot \frac{d \left[\nabla d^{\pm} \circ \gamma_\epsilon\right]}{d t}(t) \displaybreak[0] \\
\geq \abs{p_\epsilon(t)} \left[-L + \frac{K_{\min}}{\epsilon} - L K_{\max}\right],
\end{multline*}
where $L$ is an upper bound on the Lipschitz constants of $d^{\pm}$ and $k$. Up to decreasing $\epsilon_0$, we have $-L + \frac{K_{\min}}{\epsilon} - L K_{\max} > 0$ for every $\epsilon \in (0, \epsilon_0)$,
and hence $\dot\alpha(t) > 0$ for $t \in (a, b)$, which contradicts \eqref{AlphaInAandB}. This contradiction establishes \ref{equiv-OCP-1}.

To establish \ref{equiv-OCP-2}, let $\gamma_\epsilon$ be an optimal trajectory for $\OCP_\epsilon(k_\epsilon)$ starting at $(t_0, x_0)$ and denote by $T_\epsilon$ its first exit time after $t_0$. Since $\gamma \in \Opt(k, t_0, x_0)$, $\gamma$ is admissible for $\OCP_\epsilon(k_\epsilon)$, and thus $\tau_\Gamma(t_0, \gamma) \geq T_\epsilon$. On the other hand, by \ref{equiv-OCP-1}, we have $\gamma_\epsilon \in \Adm(k)$, and thus $T_\epsilon \leq \tau_\Gamma(t_0, \gamma)$. Thus $\tau_\Gamma(t_0, \gamma) = T_\epsilon$, concluding the proof.
\end{proof}

\subsection{Boundary condition of the Hamilton--Jacobi equation}

Having established in particular in Proposition~\ref{PropEquivEpsilon} that optimal controls for $\OCP(k)$ are Lipschitz continuous, we are now able to deduce a boundary condition for the Hamilton--Jacobi equation \eqref{H-J equation}.

\begin{proposition}\label{PropBoundaryHJ}
Consider $\OCP(k)$ under the assumptions \ref{HypoOmega-GeoDist}, \ref{HypoOCP-k}, and \ref{HypoOmega-C11}, and let $\varphi$ be its value function and $\mathbf n$ be the exterior normal of $\Omega$. Then $\varphi$ satisfies $\nabla\varphi(t, x) \cdot \mathbf n(x) \geq 0$ for $(t, x) \in \mathbb R_+ \times (\partial\Omega \setminus \Gamma)$ in the viscosity supersolution sense.
\end{proposition}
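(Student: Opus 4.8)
The plan is to unwind the viscosity supersolution condition into a one‑sided bound on $\varphi$ in the inward normal direction, and then to establish that bound by building competitor trajectories out of an optimal one, using the $C^{1,1}$‑regularity of optimal trajectories obtained above.

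So, let $\phi\in C^1$ be such that $\varphi-\phi$ has a local minimum at $(t_0,x_0)\in\mathbb R_+\times(\partial\Omega\setminus\Gamma)$, normalized so that $\varphi(t_0,x_0)=\phi(t_0,x_0)$; one must show $\nabla\phi(t_0,x_0)\cdot\mathbf n(x_0)\ge0$, $\nabla$ denoting the spatial gradient. Since $\partial\Omega$ is $C^{1,1}$, for $\delta>0$ small one has $x_0-\delta\mathbf n(x_0)\in\Omega$ with $d^{\pm}(x_0-\delta\mathbf n(x_0))=-\delta$, so local minimality gives $\phi(t_0,x_0-\delta\mathbf n(x_0))\le\varphi(t_0,x_0-\delta\mathbf n(x_0))$ and therefore
\begin{multline*}
-\nabla\phi(t_0,x_0)\cdot\mathbf n(x_0)=\lim_{\delta\to0^+}\frac{\phi(t_0,x_0-\delta\mathbf n(x_0))-\phi(t_0,x_0)}{\delta} \\
\le\limsup_{\delta\to0^+}\frac{\varphi(t_0,x_0-\delta\mathbf n(x_0))-\varphi(t_0,x_0)}{\delta}.
\end{multline*}
Hence it suffices to show that this $\limsup$ is $\le 0$, i.e.\ that moving slightly into $\Omega$ from a boundary point does not increase, at first order, the minimal time to reach $\Gamma$.

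To this end, take $\gamma\in\Opt(k,t_0,x_0)$ (nonempty by Proposition~\ref{PropOCP}) with optimal control $u$. By Proposition~\ref{PropEquivEpsilon}, $\gamma$ is $C^{1,1}$ (so $\ddot\gamma\in L^\infty$), $\lvert\dot\gamma\rvert=k(\cdot,\gamma)$ (hence $\lvert u\rvert=1$) on $[t_0,t_0+\tau]$ with $\tau:=\varphi(t_0,x_0)>0$, and $\gamma$ stays in $\bar\Omega$. As $\partial\Omega$ is $C^{1,1}$, $s\mapsto d^{\pm}(\gamma(t_0+s))$ is differentiable near $0$, is $\le0$, and vanishes at $s=0$, so its right derivative at $0$ is $\le0$; writing $k_0:=k(t_0,x_0)>0$ and $\mu:=-u(t_0)\cdot\mathbf n(x_0)$, this reads $\mu\ge0$. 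When $\mu>0$, the optimal trajectory enters $\Omega$ at normal rate $k_0\mu$: putting $s_\delta:=\delta/(k_0\mu)$ and using $\gamma(t_0+s)=x_0+sk_0u(t_0)+O(s^2)$ (remainder controlled by $\lVert\ddot\gamma\rVert_\infty$) together with $\nabla d^{\pm}(x_0)=\mathbf n(x_0)$, one checks that $\gamma(t_0+s_\delta)$ lies at depth $\delta+O(\delta^2)$ inside $\Omega$ and that $\lvert x_0-\delta\mathbf n(x_0)-\gamma(t_0+s_\delta)\rvert=(1+O(\delta))\,\delta\,\lvert u^{\mathrm T}(t_0)\rvert/\mu$, where $u^{\mathrm T}(t_0)$ is the component of $u(t_0)$ tangent to $\partial\Omega$, so that $\lvert u^{\mathrm T}(t_0)\rvert=\sqrt{1-\mu^2}<1$. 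Both points then lie in $\Omega$ at depth of order $\delta$, so for $\delta$ small the $C^{1,1}$ bound on $\partial\Omega$ guarantees that the straight segment joining them stays in $\bar\Omega$, and the admissible speed along it is $\ge k_0-O(\delta)$. Consequently, starting from $x_0-\delta\mathbf n(x_0)$ at time $t_0$, an agent can run along that segment to $\gamma(t_0+s_\delta)$ in time $(1+O(\delta))\,\delta\,\lvert u^{\mathrm T}(t_0)\rvert/(k_0\mu)<s_\delta$ (the strict inequality thanks to $\lvert u^{\mathrm T}(t_0)\rvert<1$), then stand still until time $t_0+s_\delta$, and then follow $\gamma|_{[t_0+s_\delta,\,t_0+\tau]}$, reaching $\Gamma$ at time $t_0+\tau$. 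Hence $\varphi(t_0,x_0-\delta\mathbf n(x_0))\le\varphi(t_0,x_0)$ for $\delta$ small, so the $\limsup$ above is $\le0$ and $\nabla\phi(t_0,x_0)\cdot\mathbf n(x_0)\ge0$.

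The remaining case $\mu=0$ — the optimal trajectory leaving $x_0$ tangentially and possibly grazing $\partial\Omega$ on a whole time interval — is where the difficulty concentrates and which I expect to be the main obstacle, since one can no longer ``jump onto'' $\gamma$ from the interior. The natural strategy is to push the boundary‑grazing portion of $\gamma$ inward by $\delta$ along the normal field, replacing $\gamma(s)$ by $\gamma(s)-\delta\,\mathbf n(\pi(\gamma(s)))$ (with $\pi$ the nearest‑point projection onto $\partial\Omega$), cutting this perturbation smoothly off to zero where $\gamma$ is already well inside $\Omega$, and in particular near the endpoint in $\Gamma$ so that the modified curve still ends on $\Gamma$, and finally reparametrizing time by a factor $1+o(1)$ to restore admissibility. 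The key point — making this a competitor that reaches $\Gamma$ in time $\varphi(t_0,x_0)+o(\delta)$, rather than merely $\varphi(t_0,x_0)+O(\delta)$, which would be insufficient — requires combining the $C^{1,1}$ bound on $\partial\Omega$, the $L^\infty$ bound on $\ddot\gamma$, and the geometric observation that an optimal trajectory cannot graze a locally concave piece of $\partial\Omega$ (a chord across such a ``dent'' would be a strictly shorter admissible competitor), so that the inward shift does not lengthen $\gamma$ at order $\delta$; controlling the Lipschitz‑in‑space variation of $k$ along the shifted curve is the last technical ingredient. Once this case is handled, the reduction of the first two paragraphs concludes the proof.
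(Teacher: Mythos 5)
Your reduction of the viscosity statement to the pointwise bound $\limsup_{\delta\to0^+}\bigl[\varphi(t_0,x_0-\delta\mathbf n(x_0))-\varphi(t_0,x_0)\bigr]/\delta\le0$ is valid, and your construction in the transversal case $\mu>0$ is essentially correct (the chord between two points at depth of order $\delta$ stays in $\bar\Omega$ by the uniform ball condition coming from \ref{HypoOmega-C11}, and the waiting control $u=0$ is admissible). But the proof is not complete: the grazing case $\mu=0$, which you explicitly leave open, is precisely the case where the state constraint is active and where the whole difficulty of the proposition lies, so what remains is not a technical remainder but the core of the statement. Moreover, the strategy you sketch for it (shift the boundary-hugging portion of $\gamma$ inward by $\delta$ along the normal field and reparametrize) does not appear to be repairable as stated: since $k$ is only Lipschitz in space, the shifted curve sees speeds differing from those along $\gamma$ by $O(\delta)$ over a time interval of fixed length, and the sign of this effect is not controlled --- if $k$ is largest on $\partial\Omega$ and the optimal trajectory hugs the boundary, the uniformly shifted competitor arrives later by a genuine $O(\delta)$, not $o(\delta)$. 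To make your route work in that case one needs a finer competitor (e.g.\ rejoining the boundary over a mesoscopic horizontal scale $a$ with $\delta\ll a\ll1$, which yields an $o(\delta)$ loss), and none of this is in your text; the appeal to ``an optimal trajectory cannot graze a locally concave piece of $\partial\Omega$'' is also unproven and does not address the variation of $k$.

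For comparison, the paper avoids the pointwise claim (which is stronger than what is needed) entirely and argues by contradiction at the level of the test function: assuming $\nabla\xi(t_0,x_0)\cdot\mathbf n(x_0)<0$, one first gets the subsolution-type inequality $-\partial_t\xi+k\lvert\nabla\xi\rvert-1\le0$ at $(t_0,x_0)$ by running a short admissible trajectory backward in time in the direction $-\nabla\xi/\lvert\nabla\xi\rvert$ (inward under the contradiction hypothesis) and using Proposition~\ref{PropOCP}\ref{PropOCP-DPP}; then the DPP equality along an optimal trajectory, together with the Lipschitz continuity of optimal controls from Proposition~\ref{PropEquivEpsilon} (so that $u(t_0)$ is well defined), forces $u(t_0)=-\nabla\xi(t_0,x_0)/\lvert\nabla\xi(t_0,x_0)\rvert$, which points strictly outward and makes $\gamma$ leave $\bar\Omega$, a contradiction. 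This bypasses exactly the tangential case on which your argument gets stuck; as it stands, your proposal has a genuine gap there.
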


\begin{proof}
Let $(t_0, x_0) \in \mathbb R_+ \times (\partial\Omega \setminus\Gamma)$ and $\xi$ be a smooth function defined on a neighborhood $V$ of $(t_0, x_0)$ in $\mathbb R_+ \times \bar\Omega$ such that $\xi(t_0, x_0) = \varphi(t_0, x_0)$ and $\xi(t, x) \leq \varphi(t, x)$ for $(t, x) \in V$. Assume, to obtain a contradiction, that $\nabla\xi(t_0, x_0) \cdot \mathbf n(x_0) < 0$. Let $\gamma \in \Opt(k, t_0, x_0)$, denote by $u$ its associated optimal control, and define $\tilde\gamma: [t_0 - \epsilon, +\infty) \to \bar\Omega$ for $\epsilon > 0$ small enough by $\tilde\gamma(t) = \gamma(t)$ for $t \geq t_0$ and as the solution of $\dot{\tilde\gamma}(t) = - k(t, \tilde\gamma(t)) \frac{\nabla\xi(t_0, x_0)}{\abs{\nabla\xi(t_0, x_0)}}$ for $t \in [t_0 - \epsilon, t_0]$ with final condition $\tilde\gamma(t_0) = x_0$ (we extend $k$ to negative times in a Lipschitz manner if needed). Applying Proposition~\ref{PropOCP}\ref{PropOCP-DPP} to $\tilde\gamma$, we get that $\varphi(t_0, x_0) \geq \varphi(t_0 - h, \tilde\gamma(t_0 - h)) - h$ for every $h \in [0, \epsilon]$, and thus $\xi(t_0, x_0) \geq \xi(t_0 - h, \tilde\gamma(t_0 - h)) - h$. Since $\xi(t_0 - h, \tilde\gamma(t_0 - h)) = \xi(t_0, x_0) - h \partial_t \xi(t_0, x_0) - h \dot{\tilde\gamma}(t_0^-) \cdot \nabla\xi(t_0, x_0) + o(h)$, we deduce that
\begin{equation}
\label{eq:viscosity-boundary-1}
-\partial_t \xi(t_0, x_0) + k(t_0, x_0) \abs{\nabla\xi(t_0, x_0)} - 1 \leq 0.
\end{equation}

Since $\gamma \in \Opt(k, t_0, x_0)$, we have, by Proposition~\ref{PropOCP}\ref{PropOCP-DPP}, that $\varphi(t_0, x_0) = \varphi(t_0 + h, \gamma(t_0 + h)) + h$ for $h \geq 0$ small enough, and thus $\xi(t_0, x_0) \geq \xi(t_0 + h, \tilde\gamma(t_0 + h)) + h$. Performing the first order expansion of $\xi(t_0 + h, \gamma(t_0 + h))$ on $h$ as before and using the fact that $\gamma$ satisfies \eqref{Control sys}, we deduce that $\partial_t \xi(t_0, x_0) + k(t_0, x_0) \nabla\xi(t_0, x_0) \cdot u(t_0) + 1 \leq 0$. Adding with \eqref{eq:viscosity-boundary-1}, we deduce that $\nabla\xi(t_0, x_0) \cdot u(t_0) + \abs{\nabla\xi(t_0, x_0)} \leq 0$ and, since $u(t_0) \in B_1$, this implies that $u(t_0) = -\frac{\nabla\xi(t_0, x_0)}{\abs{\nabla\xi(t_0, x_0)}}$. Since $\nabla\xi(t_0, x_0) \cdot \mathbf n(x_0) < 0$, this would imply that $\gamma$ leaves $\bar\Omega$ at some $t_0 + h$ for $h > 0$ small enough, contradicting the fact that $\gamma \in \Opt(k, t_0, x_0)$. This contradiction establishes that $\nabla\xi(t_0, x_0) \cdot \mathbf n(x_0) \geq 0$, as required.
\end{proof}

\subsection{Characterization of optimal controls}

Using Propositions~\ref{PropEquivEpsilon} and \ref{PropBoundaryHJ}, we are now in position to characterize optimal controls of $\OCP(k)$. We start by introducing the two main objects that we will use in our characterization.

\begin{definition}
Consider $\OCP(k)$ under assumptions \ref{HypoOmega-GeoDist}, \ref{HypoOCP-k}, and \ref{HypoOmega-C11}. Let $\varphi$ be its value function and take $(t_0,x_0) \in \mathbb R_+ \times \bar{\Omega}$.
\begin{enumerate}
\item We define the set $\mathcal{U}(t_0,x_0)$ of \emph{optimal directions} at $(t_0,x_0)$ as the set of $u_0 \in \mathbb S^{d-1}$ for which there exists $\gamma \in \Opt(k, t_0, x_0)$ such that the optimal control $u$ associated with $\gamma$ satisfies $u(t_0) = u_0$.
\item We define the set $\mathcal W(t_0, x_0)$ of \emph{directions of maximal descent of $\varphi$} at $(t_0, x_0)$ as the set of $u_0 \in \mathbb S^{d-1}$ such that
\begin{equation}
\label{eq:defi-W}
\lim_{h\to 0^+} \frac{\varphi(t_0 + h, x_0 + h k(t_0, x_0) u_0) - \varphi(t_0, x_0)}{h} = -1.
\end{equation}
\end{enumerate}
\end{definition}

Note that $\mathcal U(t_0, x_0) \neq \varnothing$ for $x_0 \in \bar\Omega \setminus \Gamma$ and, by Proposition~\ref{PropOCP}\ref{PropOCP-DPP}, the quantity on the left-hand side of \eqref{eq:defi-W} whose limit is being computed is greater than or equal to $-1 + o(1)$ as $h \to 0^+$. The main result of this section is the following.

\begin{theorem}\label{thm Ut_0,x_0}
Consider $\OCP(k)$ and its value function $\varphi$ under \ref{HypoOmega-GeoDist}, \ref{HypoOCP-k}, and \ref{HypoOmega-C11} and let $(t_0, x_0) \in \mathbb R_+ \times \bar\Omega$.
\begin{enumerate}
\item\label{UW-3} If $\varphi$ is differentiable at $(t_0, x_0)$, then $\mathcal W(t_0, x_0) = \left\{-\frac{\nabla\varphi(t_0, x_0)}{\abs{\nabla\varphi(t_0, x_0)}}\right\}$.
\item\label{UW-2} For every $\gamma \in \Opt(k, t_0, x_0)$ and $t \in (t_0, t_0 + \varphi(t_0, x_0))$, $\mathcal U(t, \gamma(t))$ contains exactly one element.
\item\label{UW-1} We have $\mathcal{U}(t_0,x_{0}) = \mathcal{W}(t_0,x_{0})$.
\end{enumerate}
\end{theorem}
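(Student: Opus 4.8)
We may assume $x_0\in\bar\Omega\setminus\Gamma$: for $x_0\in\Gamma$ one has $\varphi(t_0,x_0)=0$ while $\varphi\geq 0$, so $\mathcal U(t_0,x_0)=\mathcal W(t_0,x_0)=\varnothing$ and all three assertions are trivial. The plan is to split \ref{UW-1} into its two inclusions, prove \ref{UW-3} along the way, and then use \ref{UW-3} for \ref{UW-2} and for the remaining inclusion. \emph{Step 1: $\mathcal U(t_0,x_0)\subseteq\mathcal W(t_0,x_0)$.} Given $u_0\in\mathcal U(t_0,x_0)$, take $\gamma\in\Opt(k,t_0,x_0)$ whose associated control $u$ satisfies $u(t_0)=u_0$. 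By Proposition~\ref{PropEquivEpsilon}, $u$ is Lipschitz near $t_0$, so $\gamma(t_0+h)=x_0+h\,k(t_0,x_0)u_0+o(h)$ as $h\to 0^+$; combining the dynamic programming equality $\varphi(t_0+h,\gamma(t_0+h))=\varphi(t_0,x_0)-h$ of Proposition~\ref{PropOCP}\ref{PropOCP-DPP} with the Lipschitz continuity of $\varphi$ (Proposition~\ref{PropOCP}) gives exactly \eqref{eq:defi-W}, so $u_0\in\mathcal W(t_0,x_0)$; in particular $\mathcal W(t_0,x_0)\neq\varnothing$.

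\emph{Step 2: proof of \ref{UW-3}.} Assume $\varphi$ is differentiable at $(t_0,x_0)$. I would first show that the Hamilton--Jacobi equation \eqref{H-J equation} is satisfied with \emph{equality} at $(t_0,x_0)$ and that $\nabla\varphi(t_0,x_0)\neq 0$. For $x_0\in\Omega$, the equality is immediate from the sub- and supersolution properties in Proposition~\ref{PropOCP}\ref{PropOCP-HJ}, and $\nabla\varphi(t_0,x_0)\neq 0$ follows from Proposition~\ref{PropOCP}\ref{PropOCP-OptimalControl} applied at $t=t_0$ to any $\gamma\in\Opt(k,t_0,x_0)$. For $x_0\in\partial\Omega$, one only has the supersolution inequality; the matching reverse inequality is obtained by testing \eqref{eq:DPP} against the admissible curve leaving $x_0$ with constant control $-\nabla\varphi(t_0,x_0)/\abs{\nabla\varphi(t_0,x_0)}$, which stays in $\bar\Omega$ for small positive times because $\nabla\varphi(t_0,x_0)\cdot\mathbf n(x_0)\geq 0$ by Proposition~\ref{PropBoundaryHJ} (perturbing the direction slightly inward if this inner product vanishes), the non-vanishing of $\nabla\varphi(t_0,x_0)$ being part of what must be checked here as well. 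Now, for any $u_0\in\mathbb S^{d-1}$ with $x_0+h\,k(t_0,x_0)u_0\in\bar\Omega$ for small $h>0$, differentiability turns the quotient in \eqref{eq:defi-W} into $\partial_t\varphi(t_0,x_0)+k(t_0,x_0)\nabla\varphi(t_0,x_0)\cdot u_0$, which, using the Hamilton--Jacobi equality, equals $-1$ if and only if $\nabla\varphi(t_0,x_0)\cdot u_0=-\abs{\nabla\varphi(t_0,x_0)}$, i.e.\ $u_0=-\nabla\varphi(t_0,x_0)/\abs{\nabla\varphi(t_0,x_0)}$. Since that vector lies in $\mathcal U(t_0,x_0)\subseteq\mathcal W(t_0,x_0)$ by Step~1 (and $\mathcal U(t_0,x_0)\neq\varnothing$), this establishes \ref{UW-3}, and in fact that $\mathcal U(t_0,x_0)=\mathcal W(t_0,x_0)=\{-\nabla\varphi(t_0,x_0)/\abs{\nabla\varphi(t_0,x_0)}\}$ at every point of differentiability.

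\emph{Step 3: proof of \ref{UW-2}.} Fix $\gamma\in\Opt(k,t_0,x_0)$, $t\in(t_0,t_0+\varphi(t_0,x_0))$, and set $y=\gamma(t)$. The restriction of $\gamma$ after $t$ is optimal for $(k,t,y)$ by Proposition~\ref{PropOCP}\ref{PropOCP-DPP}, so $\mathcal U(t,y)\neq\varnothing$. If $v_1,v_2\in\mathcal U(t,y)$, realized by $\sigma_i\in\Opt(k,t,y)$ with $\dot\sigma_i(t)=k(t,y)v_i$, then for each $i$ the concatenation of $\gamma|_{[t_0,t]}$ with $\sigma_i$ is admissible, starts at $x_0$, and reaches $\Gamma$ at time $t_0+\varphi(t_0,x_0)$ by the dynamic programming equality, hence belongs to $\Opt(k,t_0,x_0)$; by Proposition~\ref{PropEquivEpsilon} it is $C^{1,1}$, so its derivative is continuous at $t$, forcing $k(t,y)v_i=\dot\gamma(t)$, which is independent of $i$. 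Thus $v_1=v_2$ and $\mathcal U(t,y)$ is a singleton. (The hypothesis $t>t_0$ is exactly what provides the incoming optimal arc $\gamma|_{[t_0,t]}$ used here.)

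\emph{Step 4: $\mathcal W(t_0,x_0)\subseteq\mathcal U(t_0,x_0)$ --- the main obstacle.} Let $u_0\in\mathcal W(t_0,x_0)$ and write $\xi_h=x_0+h\,k(t_0,x_0)u_0$. The Lipschitz function $h\mapsto\varphi(t_0+h,\xi_h)$ has right derivative $-1$ at $0$ by \eqref{eq:defi-W}, while \eqref{eq:DPP} forces its difference quotients to be at least $-1+o(1)$; from this I would extract a sequence $h_n\to 0^+$ at which $\varphi$ is (fully) differentiable at $(t_0+h_n,\xi_{h_n})$ with $\partial_t\varphi(t_0+h_n,\xi_{h_n})+k(t_0,x_0)\nabla\varphi(t_0+h_n,\xi_{h_n})\cdot u_0\to -1$, which via the Hamilton--Jacobi equality of Step~2 yields $-\nabla\varphi(t_0+h_n,\xi_{h_n})/\abs{\nabla\varphi(t_0+h_n,\xi_{h_n})}\to u_0$. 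By Step~2 this unit vector belongs to $\mathcal U(t_0+h_n,\xi_{h_n})$, so one can choose $\gamma_n\in\Opt(k,t_0+h_n,\xi_{h_n})$ with $\dot\gamma_n(t_0+h_n)\to k(t_0,x_0)u_0$; by the uniform $C^{1,1}$ bound of Proposition~\ref{PropEquivEpsilon}, the Arzel\`a--Ascoli theorem, continuity of $\varphi$, and lower semicontinuity of the exit time, a subsequence of $(\gamma_n)$ converges, in $C^1$ on compact sets, to some $\gamma^*\in\Opt(k,t_0,x_0)$ with $\dot\gamma^*(t_0)=k(t_0,x_0)u_0$, i.e.\ $u_0\in\mathcal U(t_0,x_0)$. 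I expect the genuine difficulty to be concentrated in this step: extracting the sequence $h_n$ is delicate because the curve $h\mapsto(t_0+h,\xi_h)$ is Lebesgue-null, so one cannot simply invoke Rademacher's theorem for $\varphi$ along it and must instead argue through nearby genuine points of differentiability; one must rule out the degenerate alternative $\abs{\nabla\varphi(t_0+h_n,\xi_{h_n})}\to 0$, which should be excluded using the strict positivity of $k$ together with the Hamilton--Jacobi equation, so that a direction of maximal descent really does encode a limiting gradient direction; and one must adapt the straight-segment construction to admissible curves tangent to $\partial\Omega$ when $x_0\in\partial\Omega$.
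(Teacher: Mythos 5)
Your Steps 1--3 are sound and consistent with how the paper treats those parts: the inclusion $\mathcal U(t_0,x_0)\subseteq\mathcal W(t_0,x_0)$ via the dynamic programming equality of Proposition~\ref{PropOCP}\ref{PropOCP-DPP}, assertion \ref{UW-3} via the Hamilton--Jacobi equation at a differentiability point together with Proposition~\ref{PropOCP}\ref{PropOCP-OptimalControl}, and assertion \ref{UW-2} via the $C^{1,1}$ regularity of the concatenated optimal trajectory from Proposition~\ref{PropEquivEpsilon}.

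The genuine gap is in Step 4, which is the heart of the theorem. Your argument hinges on ``extracting a sequence $h_n\to 0^+$ at which $\varphi$ is fully differentiable at $(t_0+h_n,\xi_{h_n})$ with $\partial_t\varphi+k\,\nabla\varphi\cdot u_0\to -1$,'' and this is precisely what cannot be obtained from the available hypotheses. The limit \eqref{eq:defi-W} only controls difference quotients of $\varphi$ along the single curve $h\mapsto(t_0+h,\xi_h)$, which is Lebesgue-null; Rademacher's theorem gives differentiability points of $\varphi$ only off this curve, and Lipschitz continuity alone does not transfer the directional descent rate $-1$ along the segment into information about $\nabla\varphi$ at nearby differentiability points (a Lipschitz function can have difference quotients along a segment that are unrelated to its gradient on any set of full measure near that segment). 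The classical way to make such a transfer is semiconcavity of $\varphi$, which yields upper semicontinuity-type control of the (super)differential; but, as the paper stresses, semiconcavity may fail under state constraints, and avoiding it is the whole point of the technique used here. The paper's proof of $\mathcal W(t_0,x_0)\subseteq\mathcal U(t_0,x_0)$ therefore never looks for differentiability points of $\varphi$: it takes the straight arc $\gamma_0$ with control $u_0$ up to $(t_1,x_1)$, an optimal trajectory $\gamma_1$ from there, and shows by a purely geometric comparison (the auxiliary curves $\bar\gamma_1$, $\gamma_2$, $\gamma_3$ of Figure~\ref{Fig1}) that if the initial optimal direction $u_1(t_1)$ did not converge to $u_0$, one could build a strictly faster admissible path from $x_0$ to $\gamma_1(t_2)$, contradicting optimality through the dynamic programming principle; only then does it pass to the limit of optimal trajectories. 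To repair your Step 4 you would either have to supply such a shortcut-type argument (and also handle the tangential boundary case you mention), or prove a substitute for semiconcavity that you do not currently have; as written, the extraction of the sequence $h_n$ with convergence of the full gradient, and hence the whole inclusion $\mathcal W\subseteq\mathcal U$, is unsupported.
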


\begin{proof}
Assertion \ref{UW-3} follows easily from \eqref{eq:defi-W} by using Proposition~\ref{PropOCP}\ref{PropOCP-HJ} and \ref{PropOCP-OptimalControl} (see also \cite[Proposition~4.13]{SadeghiMulti} for a proof in the case with no state constraints). Assertion \ref{UW-2} follows from the fact that optimal controls are Lipschitz continuous (Proposition~\ref{PropEquivEpsilon}) and its proof is very similar to that of \cite[Proposition~4.7]{Mazanti2019Minimal}. As for assertion \ref{UW-1}, its proof is very similar to that of \cite[Theorem~4.14]{SadeghiMulti} and we sketch it here for completeness. First, note that it suffices to consider the case $x_0 \in \bar\Omega \setminus \Gamma$ since both sets are empty if $x_0 \in \Gamma$. The inclusion $\mathcal U(t_0, x_0) \subset \mathcal W(t_0, x_0)$ can be obtained by applying Proposition~\ref{PropOCP}\ref{PropOCP-DPP} and taking the limit as $h \to 0^+$ in \eqref{eq:defi-W}. For the converse inclusion, let $u_0 \in \mathcal W(t_0, x_0)$ and note that, if $x_0 \in \partial\Omega \setminus\Gamma$, then necessarily $u_0$ points towards the inside of $\Omega$. Let $\gamma_0$ be the solution of \eqref{Control sys} starting from $(t_0, x_0)$ and with constant control $u_0$, defined in $[t_0, t_0 + h]$ for some $h > 0$ small enough, define $t_1 = t_0 + h$ and $x_1 = \gamma_0(t_0 + h)$, and let $\gamma_1 \in \Opt(k, t_1, x_1)$. The conclusion follows by letting $h \to 0^+$ if one assumes that the optimal control $u_1$ associated with $\gamma_1$ satisfies $u_1(t_1) \to u_0$ as $h \to 0^+$, using the fact that limits of optimal trajectories are also optimal.

We prove by contradiction that we necessarily have $u_1(t_1) \to u_0$ as $h \to 0^+$. Indeed, assume that this is not the case, let $\bar\gamma_1$ be the solution of \eqref{Control sys} starting from $(t_1, x_1)$ and with constant control $u_1(t_1)$, and define $t_2 = t_1 + h$, $x_2 = \gamma_1(t_2)$, and $\bar x_2 = \bar\gamma_1(t_2)$. Define also $\gamma_2$ as the solution of \eqref{Control sys} starting from $(t_0, x_0)$ and with constant control $\frac{\bar x_2 - x_0}{\abs{\bar x_2 - x_0}}$, $\tau$ be the time at which $\gamma_2$ arrives at $\bar x_2$, and $\gamma_3$ be the solution of \eqref{Control sys} starting from $(\tau, \bar x_2)$ and with constant control $\frac{x_2 - \bar x_2}{\abs{x_2 - \bar x_2}}$ (see Figure~\ref{Fig1} for an illustration of these constructions). Note that, since $u_0$ points towards the inside of $\Omega$, all points and trajectories in this construction remain in $\bar\Omega$ for $h$ small enough.

\tikzset{
 mid arrow/.style={postaction={decorate,decoration={
        markings,
        mark=at position .5 with {\arrow{Stealth}}
      }}}
}

\begin{figure}[ht]
\centering
\resizebox{0.5\columnwidth}{!}{
\begin{tikzpicture}
\node (x0) at (0, 0) {};
\node (x1) at (0, -1) {};
\node (x2bar) at (3, -1) {};
\node (x2) at (3.25, -2) {};

\draw[red, mid arrow] (x0.center) -- node[midway, left] {$\gamma_0$} (x1.center);
\draw[blue, mid arrow] (x0.center) -- node[midway, above right] {$\gamma_2$} (x2bar.center);
\draw[violet, mid arrow] (x1.center) to[out = 0, in = 180] node[midway, below left] {$\gamma_1$} (x2.center);
\draw[violet, mid arrow] (x2.center) -- ++(1, 0);
\draw[green!50!black, mid arrow] (x1.center) -- node[midway, below right] {$\bar \gamma_1$} (x2bar.center);
\draw[orange!75!black, mid arrow] (x2bar.center) -- node[midway, right] {$\gamma_3$} (x2.center);

\fill (x0) circle[radius=0.05] node[left] {$x_0$};
\fill (x1) circle[radius=0.05] node[left] {$x_1$};
\fill (x2bar) circle[radius=0.05] node[right] {$\bar x_2$};
\fill (x2) circle[radius=0.05] node[below] {$x_2$};
\end{tikzpicture}}
\caption{Illustration of the constructions used in the proof of Theorem~\ref{thm Ut_0,x_0}\ref{UW-1} (adapted from \cite{SadeghiMulti}).}
\label{Fig1}
\end{figure}
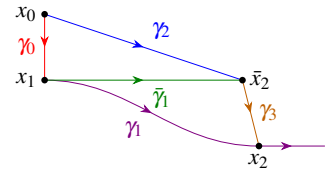

Since the angle between $\gamma_0$ and $\bar\gamma_1$ at $x_1$ is different from $\pi$ as $h \to 0^+$, one can prove that there exists $\rho < 1$ such that the time $\tau - t_0$ that $\gamma_2$ takes to go from $x_0$ to $\bar x_2$ is at most $2 \rho h + O(h^2)$. On the other hand, since $\gamma_1$ and $\bar\gamma_1$ are tangent at $x_1$, the time that $\gamma_3$ takes to go from $\bar x_2$ to $x_2$ is at most $O(h^2)$. We have thus constructed two trajectories to go from $x_0$ to $x_2$: one obtained as the concatenation of $\gamma_0$ and $\gamma_1$, which takes a time $2h$, and another obtained as the concatenation of $\gamma_2$ and $\gamma_3$, which takes a time $2 \rho h + O(h^2) < 2h$. By applying Proposition~\ref{PropOCP}\ref{PropOCP-DPP} to both trajectories, letting $h \to 0^+$, and using \cite[Proposition~4.4]{Mazanti2019Minimal}, one gets the conclusion that $\rho \geq 1$, yielding the desired contradiction.
\end{proof}

Motivated by Theorem~\ref{thm Ut_0,x_0}\ref{UW-3}, we provide the following definition.

\begin{definition}
Consider $\OCP(k)$ under assumptions \ref{HypoOmega-GeoDist}, \ref{HypoOCP-k}, and \ref{HypoOmega-C11}, let $\varphi$ be its value function, and take $(t_0, x_0) \in \mathbb R_+ \times \bar\Omega$. If $\mathcal{W}(t_0, x_0)$ contains exactly one element, we denote this element by $-\widehat{\nabla\varphi}(t_0,x_0)$, and call it the \emph{normalized gradient} of $\varphi$ at $(t_0, x_0)$.
\end{definition}

As a consequence of Theorem~\ref{thm Ut_0,x_0}, we obtain the following characterization of optimal trajectories.

\begin{corollary}\label{coro normalized}
Consider $\OCP(k)$ under assumptions \ref{HypoOmega-GeoDist}, \ref{HypoOCP-k}, and \ref{HypoOmega-C11}, let $\varphi$ be its value function, and take $(t_0, x_0) \in \mathbb R_+ \times \bar\Omega$ and $\gamma \in \Opt(k, t_0, x_0)$. Then, for every $t \in (t_0, t_{0} + \varphi(t_{0}, x_0))$, $\varphi$ admits a normalized gradient at $(t, \gamma(t))$ and
$
\Dot{\gamma}(t)= -k(t, \gamma(t))  \widehat{\nabla \varphi}(t, \gamma(t)).
$
\end{corollary}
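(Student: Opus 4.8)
The plan is to read the statement off Theorem~\ref{thm Ut_0,x_0} once we know that, at an interior time $t$, the velocity direction of $\gamma$ coincides with the unique element of $\mathcal U(t,\gamma(t))$. So I would fix $t \in (t_0, t_0 + \varphi(t_0, x_0))$ and let $u$ be the optimal control associated with $\gamma$. By Proposition~\ref{PropEquivEpsilon}, $\gamma$ is $C^{1,1}$ on $[t_0, t_0 + \tau_\Gamma(t_0, \gamma)]$ and $u$ admits a Lipschitz continuous representative there, which I fix henceforth; moreover, since by Proposition~\ref{PropEquivEpsilon}\ref{equiv-OCP-2} the curve $\gamma$ is also optimal for $\OCP_\epsilon(k_\epsilon)$, the lemma following Lemma~\ref{Conse Pontryagin} gives $\abs{u(s)} = 1$ for every $s$ in that interval. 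Consequently $\dot\gamma(t) = k(t, \gamma(t)) u(t)$ and $u(t) \in \mathbb S^{d-1}$, and it remains to prove that $u(t) \in \mathcal U(t, \gamma(t))$ and that this set is a singleton equal to $\mathcal W(t, \gamma(t))$.

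For the membership $u(t) \in \mathcal U(t, \gamma(t))$ I would use the principle of optimality. Since $\gamma \in \Opt(k, t_0, x_0)$, the equality case in Proposition~\ref{PropOCP}\ref{PropOCP-DPP} yields $\varphi(s, \gamma(s)) = \varphi(t_0, x_0) - (s - t_0)$ for $s \in [t_0, t_0 + \varphi(t_0, x_0)]$. Define $\tilde\gamma \colon \mathbb R_+ \to \bar\Omega$ to be constant equal to $\gamma(t)$ on $[0, t]$, equal to $\gamma$ on $[t, t_0 + \varphi(t_0, x_0)]$, and constant afterwards; then $\tilde\gamma \in \Adm(k)$, it is constant on $[0, t]$ and on $[t + \tau_\Gamma(t, \tilde\gamma), +\infty)$ with $\tau_\Gamma(t, \tilde\gamma) = \varphi(t_0, x_0) - (t - t_0) > 0$, and the displayed identity rewrites precisely as equality in \eqref{eq:DPP} for $\tilde\gamma$ based at $(t, \gamma(t))$ for all $h \in [0, \tau_\Gamma(t, \tilde\gamma)]$. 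By the converse part of Proposition~\ref{PropOCP}\ref{PropOCP-DPP}, $\tilde\gamma \in \Opt(k, t, \gamma(t))$, and the continuous representative of the control associated with $\tilde\gamma$ takes the value $u(t)$ at time $t$ (as $\dot{\tilde\gamma}(t^+) = \dot\gamma(t) = k(t,\gamma(t)) u(t)$); hence $u(t) \in \mathcal U(t, \gamma(t))$.

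Finally I would apply Theorem~\ref{thm Ut_0,x_0}: part~\ref{UW-2} says $\mathcal U(t, \gamma(t))$ is a singleton, and part~\ref{UW-1} says it coincides with $\mathcal W(t, \gamma(t))$, which is therefore a singleton as well; by definition, $\varphi$ admits a normalized gradient at $(t, \gamma(t))$ and $\mathcal W(t, \gamma(t)) = \{-\widehat{\nabla\varphi}(t, \gamma(t))\}$. Since $u(t)$ belongs to this set, $u(t) = -\widehat{\nabla\varphi}(t, \gamma(t))$, whence $\dot\gamma(t) = k(t, \gamma(t)) u(t) = -k(t, \gamma(t)) \widehat{\nabla\varphi}(t, \gamma(t))$. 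I do not expect a genuine obstacle here: the only delicate points are bookkeeping ones — making pointwise sense of $u(t)$ and of the identity $\dot\gamma(t) = k(t, \gamma(t)) u(t)$ with $\abs{u(t)} = 1$, which is handled by the $C^{1,1}$ regularity from Proposition~\ref{PropEquivEpsilon}, and matching the conventions on optimal controls vanishing outside the active time interval when invoking the converse direction of the dynamic programming principle — while all the substance is contained in Theorem~\ref{thm Ut_0,x_0}.
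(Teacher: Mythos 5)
Your argument is correct and follows the same route the paper intends: restriction of an optimal trajectory via the dynamic programming principle gives $u(t)\in\mathcal U(t,\gamma(t))$, and Theorem~\ref{thm Ut_0,x_0}\ref{UW-2}--\ref{UW-1} then identify this singleton with $\mathcal W(t,\gamma(t))=\{-\widehat{\nabla\varphi}(t,\gamma(t))\}$. The paper states the corollary as an immediate consequence of Theorem~\ref{thm Ut_0,x_0}, and your bookkeeping (Lipschitz representative of the control from Proposition~\ref{PropEquivEpsilon}, $\abs{u(t)}=1$, tail optimality) is exactly the fleshed-out version of that deduction.
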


We also have the following result on the normalized gradient, whose proof is very similar to that of \cite[Proposition~4.17]{SadeghiMulti} (see also \cite[Proposition~4.9]{Mazanti2019Minimal}).

\begin{proposition}\label{PropNormalizedGradientContinuous}
Consider $\OCP(k)$ and its value function $\varphi$ under assumptions \ref{HypoOmega-GeoDist}, \ref{HypoOCP-k}, and \ref{HypoOmega-C11}. Then $\widehat{\nabla\varphi}$ is continuous on its domain of definition.
\end{proposition}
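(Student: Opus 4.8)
The plan is to prove the continuity of $\widehat{\nabla\varphi}$ sequentially. Recall first that, by Theorem~\ref{thm Ut_0,x_0}, the domain of definition $\mathcal D$ of $\widehat{\nabla\varphi}$ is contained in $\mathbb R_+ \times (\bar\Omega \setminus \Gamma)$, and that at each $(t, x) \in \mathcal D$ the vector $-\widehat{\nabla\varphi}(t, x)$ is the unique element of $\mathcal U(t, x) = \mathcal W(t, x)$; in particular, the optimal control $u$ associated with \emph{every} $\gamma \in \Opt(k, t, x)$ satisfies $u(t) = -\widehat{\nabla\varphi}(t, x)$. Given $(t_0, x_0) \in \mathcal D$ and a sequence $(t_n, x_n) \in \mathcal D$ with $(t_n, x_n) \to (t_0, x_0)$, I would set $u_n = -\widehat{\nabla\varphi}(t_n, x_n)$ and $u_0 = -\widehat{\nabla\varphi}(t_0, x_0)$ in $\mathbb S^{d-1}$ and, using compactness of $\mathbb S^{d-1}$, reduce the problem to showing that any subsequential limit $u_\ast$ of $(u_n)_n$ equals $u_0$.

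To exploit the information carried by $u_n$, I would choose, for each $n$, some $\gamma_n \in \Opt(k, t_n, x_n)$; its associated control $v_n$ then satisfies $v_n(t_n) = u_n$ and its first exit time equals $\varphi(t_n, x_n)$, which converges to $T := \varphi(t_0, x_0) > 0$ by continuity of $\varphi$ (Proposition~\ref{PropOCP}) and the fact that $x_0 \notin \Gamma$. Reparametrising by $\tilde\gamma_n(s) = \gamma_n(t_n + s)$ so that all trajectories start at time $0$, Proposition~\ref{PropEquivEpsilon} provides uniform bounds on $\abs{\dot{\tilde\gamma}_n} \le K_{\max}$ and on the Lipschitz constant of $\dot{\tilde\gamma}_n$ on $[0, \varphi(t_n, x_n)]$. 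Arzelà--Ascoli applied to $(\tilde\gamma_n, \dot{\tilde\gamma}_n)$ then yields, along a further subsequence, a limit $\tilde\gamma$ with $\tilde\gamma_n \to \tilde\gamma$ and $\dot{\tilde\gamma}_n \to \dot{\tilde\gamma}$ uniformly on compact subintervals of $[0, T)$. Evaluating the derivatives at $s = 0$ and using the continuity of $k$ gives $\dot{\tilde\gamma}(0) = \lim_{n} k(t_n, x_n)\, v_n(t_n) = k(t_0, x_0)\, u_\ast$.

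The crux of the argument is then to show that the curve $\gamma$ built from $\tilde\gamma$ (extended to be constant on $[0, t_0]$ and after $t_0 + T$, following the paper's convention) belongs to $\Opt(k, t_0, x_0)$. This is a stability-of-optimality statement under perturbation of the initial data $(t_n, x_n) \to (t_0, x_0)$, which I would obtain by letting $n \to \infty$ in the dynamic programming equality $\varphi(t_n + h, \gamma_n(t_n + h)) + h = \varphi(t_n, x_n)$ (Proposition~\ref{PropOCP}\ref{PropOCP-DPP}), using the continuity of $\varphi$, checking that $\gamma(t_0 + T) \in \Gamma$ (closedness of $\Gamma$) and that $\gamma$ does not meet $\Gamma$ before $t_0 + T$ (since $\varphi$ vanishes on $\Gamma$), so that $\tau_\Gamma(t_0, \gamma) = T$, and then invoking the converse part of Proposition~\ref{PropOCP}\ref{PropOCP-DPP}. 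This is exactly the closedness property of the family of optimal trajectories already used in the proof of Theorem~\ref{thm Ut_0,x_0} (cf.\ \cite{Mazanti2019Minimal, SadeghiMulti}). I expect this step — in particular the bookkeeping around the varying basepoints and varying exit times in the reparametrised limit — to be the main technical obstacle; the remaining ingredients are soft.

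Finally, since $\gamma \in \Opt(k, t_0, x_0)$ and $(t_0, x_0) \in \mathcal D$, the control $v$ associated with $\gamma$ satisfies $v(t_0) = u_0$; as $v$ is continuous (Proposition~\ref{PropEquivEpsilon}) and $\dot\gamma(t) = k(t, \gamma(t))\, v(t)$ on $[t_0, t_0 + T]$, this gives $\dot{\tilde\gamma}(0) = \dot\gamma(t_0^+) = k(t_0, x_0)\, u_0$. Comparing with $\dot{\tilde\gamma}(0) = k(t_0, x_0)\, u_\ast$ from the previous step and using $k(t_0, x_0) \ge K_{\min} > 0$ forces $u_\ast = u_0$, which yields the sequential — hence topological — continuity of $\widehat{\nabla\varphi}$ on $\mathcal D$. (One could alternatively run the whole argument through Corollary~\ref{coro normalized}, applied at interior times of the trajectories $\gamma_n$ rather than at their basepoints, but the reparametrised version above appears to be the most direct.)
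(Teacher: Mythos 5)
Your argument is correct and is essentially the compactness-and-stability proof that the paper invokes by reference (cf.\ \cite[Proposition~4.17]{SadeghiMulti} and \cite[Proposition~4.9]{Mazanti2019Minimal}): pick optimal trajectories from the perturbed points, use the uniform Lipschitz bound on optimal controls from Proposition~\ref{PropEquivEpsilon} and Arzel\`a--Ascoli to pass to a $C^1$ limit, use closedness of the set of optimal trajectories via the dynamic programming principle, and conclude by the uniqueness of the optimal direction at points of the domain, i.e.\ Theorem~\ref{thm Ut_0,x_0}\ref{UW-1}. No gaps worth flagging; the subsequential-limit reduction and the treatment of the varying basepoints are exactly as in the cited proofs.
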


\section{The MFG System}\label{sec MFG system}

Following the results on $\OCP(k)$ and using the relation between $\MFG(K, m_0)$ and $\OCP(k)$, we are now ready to obtain, as a consequence of Proposition~\ref{PropOCP}\ref{PropOCP-HJ}, Proposition~\ref{PropBoundaryHJ}, Corollary~\ref{coro normalized}, and Proposition~\ref{PropNormalizedGradientContinuous}, that equilibria of $\MFG(K, m_0)$ satisfy a system of PDEs.

\begin{theorem}\label{Thm MFG system}
Consider $\MFG(K, m_0)$ under the assumptions \ref{HypoOmega-GeoDist}, \ref{HypoMFG-K}, and \ref{HypoOmega-C11}. Let $Q \in \mathcal P(C(\bar\Omega))$ be an equilibrium of $\MFG(K, m_0)$, $m_t = {e_t}_{\#}Q$ for $t \geq 0$, $k$ be defined from $K$ by $k(t, x) = K(m_t, x)$, and $\varphi$ be the value function of $\OCP(k)$. Then, $(m_t, \varphi)$ solves the MFG system
\begin{equation}\label{MFGs system}
    \left \{
    \begin{aligned}
    & \partial_t m_t-\diverg\bigl(m_t K(m_t,x) \widehat{\nabla \varphi}\Bigr)=0 & & \text{in } \mathbb R_+^* \times (\bar{\Omega} \setminus \Gamma), \\
    & -\partial_t \varphi + \abs{\nabla \varphi} K(m_t,x) - 1 = 0 & & \text{in } \mathbb R_+\times (\bar{\Omega} \setminus \Gamma), \\
    & \varphi = 0 & & \text{on } \mathbb R_+\times \Gamma, \\
    & \nabla\varphi \cdot \mathbf n \ge 0 & & \text{on } \mathbb R_+ \times (\partial\Omega \setminus \Gamma),
    \end{aligned} \right.
\end{equation}
where the first equation is satisfied in the sense of distributions and the second and fourth equations are satisfied in the viscosity senses of Propositions~\ref{PropOCP}\ref{PropOCP-HJ} and \ref{PropBoundaryHJ}, respectively. In addition, $m_t\lvert_{t = 0} = m_0$ and $m_t\lvert_{\partial\Omega_t^-} = 0$, where $\partial\Omega_t^-$ is the part of $\partial\Omega$ at which $\widehat{\nabla \varphi} \cdot \mathbf n > 0$.
\end{theorem}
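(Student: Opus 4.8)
The strategy is to read off most of the system~\eqref{MFGs system} directly from the results of Section~\ref{SecOptControl}, after checking that the feedback law those results require satisfies \ref{HypoOCP-k}. Put $k(t,x) = K(m_t,x)$: the bounds $k(t,x) \in [K_{\min},K_{\max}]$ are immediate from \ref{HypoMFG-K}, and for the Lipschitz estimate I would observe that $(e_t,e_s)_{\#}Q$ is a coupling of $m_t$ and $m_s$, so $W_1(m_t,m_s) \le \int_{C(\bar\Omega)} \abs{\gamma(t)-\gamma(s)}\,dQ(\gamma)$; since $Q$-almost every $\gamma$ is optimal, hence an admissible trajectory of \eqref{Control sys} with speed bounded by $K_{\max}$, one gets $W_1(m_t,m_s) \le K_{\max}\abs{t-s}$, and together with the Lipschitz continuity of $K$ this yields $k \in \Lip(\mathbb R_+\times\bar\Omega,\mathbb R_+)$. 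Consequently Propositions~\ref{PropOCP}, \ref{PropEquivEpsilon}, \ref{PropBoundaryHJ}, \ref{PropNormalizedGradientContinuous} and Corollary~\ref{coro normalized} all apply to $\OCP(k)$; in particular the second and third lines of \eqref{MFGs system} are precisely Proposition~\ref{PropOCP}\ref{PropOCP-HJ}, the fourth line is Proposition~\ref{PropBoundaryHJ}, and $m_t\lvert_{t=0} = {e_0}_{\#}Q = m_0$ by the definition of an equilibrium.

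It then remains to prove the continuity equation and the trace condition $m_t\lvert_{\partial\Omega_t^-} = 0$. For the first I would run a superposition argument: for $Q$-almost every $\gamma$ we have $\gamma \in \Opt(k,0,\gamma(0))$, so Corollary~\ref{coro normalized} gives $\dot\gamma(t) = -k(t,\gamma(t))\widehat{\nabla\varphi}(t,\gamma(t))$ for $t \in (0,\varphi(0,\gamma(0)))$, whereas on $[\varphi(0,\gamma(0)),+\infty)$ the curve $\gamma$ is constant and takes a value in $\Gamma$. Fixing a test function $\psi$ supported in a compact subset of $\mathbb R_+^*\times(\bar\Omega\setminus\Gamma)$, the map $t \mapsto \psi(t,\gamma(t))$ vanishes near $t=0$ and on $[\varphi(0,\gamma(0)),+\infty)$, and on $(0,\varphi(0,\gamma(0)))$ one has $\partial_t\psi(t,\gamma(t)) - k(t,\gamma(t))\widehat{\nabla\varphi}(t,\gamma(t))\cdot\nabla\psi(t,\gamma(t)) = \tfrac{d}{dt}\psi(t,\gamma(t))$ by the feedback formula; hence $\int_0^\infty\bigl[\partial_t\psi - k\widehat{\nabla\varphi}\cdot\nabla\psi\bigr](t,\gamma(t))\,dt = \int_0^{\varphi(0,\gamma(0))}\tfrac{d}{dt}\psi(t,\gamma(t))\,dt = 0$, the last equality because $\psi$ vanishes near $t=0$ and $\gamma(\varphi(0,\gamma(0))) \in \Gamma$. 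Pushing forward through $m_t = {e_t}_{\#}Q$ and applying Fubini's theorem — legitimate since $K$ is bounded, $\widehat{\nabla\varphi}$ is continuous on its domain (Proposition~\ref{PropNormalizedGradientContinuous}) and $\mathbb S^{d-1}$-valued, $t\mapsto m_t$ is continuous, and for each $t>0$ the measure $m_t$ is carried by $\Gamma$ together with the domain of $\widehat{\nabla\varphi}(t,\cdot)$ — then gives $\int_0^\infty\!\int_{\bar\Omega}\bigl[\partial_t\psi - K(m_t,x)\widehat{\nabla\varphi}\cdot\nabla\psi\bigr]\,dm_t\,dt = 0$, which is exactly the weak formulation of the first line of \eqref{MFGs system}.

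For the trace condition I would argue at the level of single trajectories. Suppose, for contradiction, that for some $t>0$ a set of $\gamma$ of positive $Q$-measure satisfies $\gamma(t) = x \in \partial\Omega_t^-$. Since $\widehat{\nabla\varphi}(t,x)$ is defined we have $x \notin \Gamma$, and therefore $t \in (0,\varphi(0,\gamma(0)))$, because $\gamma$ takes values in $\Gamma$ on $[\varphi(0,\gamma(0)),+\infty)$. By Proposition~\ref{PropEquivEpsilon}, $\gamma$ is $C^{1,1}$ in a neighbourhood of $t$, and by Corollary~\ref{coro normalized}, $\dot\gamma(t) = -k(t,x)\widehat{\nabla\varphi}(t,x)$. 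Since $\partial\Omega$ is $C^{1,1}$, the signed distance $d^{\pm}$ is $C^{1,1}$ near $x$ with $\nabla d^{\pm}(x) = \mathbf n(x)$, so $s \mapsto d^{\pm}(\gamma(s))$ is differentiable at $t$ with derivative $-k(t,x)\,\widehat{\nabla\varphi}(t,x)\cdot\mathbf n(x)$, which is strictly negative because $x \in \partial\Omega_t^-$ and $k \ge K_{\min} > 0$; as $d^{\pm}(\gamma(t)) = 0$, this forces $d^{\pm}(\gamma(s)) > 0$, i.e.\ $\gamma(s) \notin \bar\Omega$, for $s$ slightly below $t$, contradicting admissibility. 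Hence $\gamma(t) \notin \partial\Omega_t^-$ for $Q$-almost every $\gamma$ and every $t>0$, whence $m_t(\partial\Omega_t^-) = {e_t}_{\#}Q(\partial\Omega_t^-) = 0$.

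I expect the only genuinely delicate part to be the bookkeeping in the superposition step — specifically, checking that the integrand in the weak formulation is well defined $m_t$-almost everywhere for almost every $t$, so that $\widehat{\nabla\varphi}$ may legitimately be integrated against $m_t$, and justifying Fubini's theorem — while every other ingredient is a direct invocation of the results of Section~\ref{SecOptControl}, so that the theorem is in essence their assembly.
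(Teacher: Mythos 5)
Your proposal is correct and follows essentially the route the paper intends: the theorem is presented there as a direct assembly of Proposition~\ref{PropOCP}\ref{PropOCP-HJ}, Proposition~\ref{PropBoundaryHJ}, Corollary~\ref{coro normalized}, and Proposition~\ref{PropNormalizedGradientContinuous}, and your verification that $k(t,x)=K(m_t,x)$ satisfies \ref{HypoOCP-k} via the $W_1$ estimate, the superposition argument for the continuity equation, and the single-trajectory argument for $m_t\lvert_{\partial\Omega_t^-}=0$ are exactly the details that assembly requires. No gaps beyond routine measurability bookkeeping, which you already flag.
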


\bibliographystyle{IEEEtran}
\bibliography{main}

\end{document}